\newcommand\blfootnote[1]{%
  \begingroup
  \renewcommand\thefootnote{}\footnote{#1}%
  \addtocounter{footnote}{-1}%
  \endgroup
}
\newcommand{\supp}{\operatorname{supp}}
\newcommand{\cA}{{\mathcal A}}
\newcommand{\cC}{{\mathcal C}}
\newcommand{\cB}{{\mathcal B}}
\newcommand{\cE}{{\mathcal E}}
\newcommand{\cF}{{\mathcal F}}
\newcommand{\cD}{{\mathcal D}}
\newcommand{\cP}{{\mathcal P}}
\newcommand{\cM}{{\mathcal M}}
\newcommand{\N}{{\mathbbm N}}
\newcommand{\Z}{{\mathbbm Z}}
\newcommand{\inn}{\operatorname{int}}
\newcommand{\lcm}{\operatorname{lcm}}
\newcommand{\card}{\operatorname{card}}
\newcommand{\piH}{\pi^{\scriptscriptstyle H}}
\newcommand{\ddelta}{\boldsymbol{\delta}}
\newcommand{\prim}[1]{#1^{prim}}
\newcommand{\Ainf}{\cA_\infty}
\newcommand{\Spec}{\operatorname{Spec}}
\renewcommand{\mod}{\text{ mod }}
\renewcommand{\sigma}{S}
\newtheorem {lemma}{Lemma}
\newtheorem{theorem}{Theorem}
\newtheorem {bemerkung}{Remark}
\newtheorem{proposition}{Proposition}
\newtheorem {corollary}{Corollary}
\newtheorem{beispiel}{Example}
\newtheorem{frage}{Question}
\newtheorem{vermutung}{Conjecture}
\newenvironment{question} {\begin{frage} \normalfont }{\end{frage}}
\newenvironment{conjecture} {\begin{vermutung} \normalfont }{\end{vermutung}}
\newenvironment{remark} {\begin{bemerkung} \normalfont }{\end{bemerkung}}
\newcommand{\strip}{[\varphi_{W'},\varphi]}
\begin{document}

\title{Generalized heredity in $\cB$-free systems
}
\author{Gerhard Keller
\thanks{I am indebted to Aurelia Dymek for several very helpful remarks and corrections to a previous version of this manuscript.}
\
\thanks{
The final version of these notes is based upon work supported by the Swedish Research Council under grant no. 2016-06596 while the author was in residence at Institut Mittag-Leffler in Djursholm, Sweden during the workshop
``Thermodynamic Formalism -- Applications to Geometry, Number Theory, and Stochastics'' (2019).}
}
\affil{Department of Mathematics, University of Erlangen-N\"urnberg, \\
 Cauerstr. 11, 91058 Erlangen, Germany
 \par Email: keller@math.fau.de}
\date{Version of \today}


\maketitle

{\begin{abstract}
Let $\cB\subseteq\N$ be a primitive set. We complement results on heredity of the $\cB$-free subshift $X_\eta$ from \cite{BKKL2015} in two directions: In the proximal case we prove that a subshift $X_\varphi$, which might be slightly larger than the subshift $X_\eta$, is always hereditary. (There is no need to assume that the set $\cB$ has light tails as in \cite{BKKL2015}, but if $\cB$ is taut, in particular if it has light tails, then $X_\varphi=X_\eta$ \cite{KKL2016,Keller2019}.) We also generalize the concept of heredity to the non-proximal (and hence non-hereditary) case by proving that $X_\varphi$ is always ``hereditary away from its unique minimal subsystem'' (which is alway Toeplitz). Finally we characterize regularity of this Toeplitz subsystem equivalently by the condition $m_H\left(\partial(\overline{\inn(W)})\right)=0$, where $W$ (``the window'') is a subset of a compact abelian group $H$ canonically associated with the set $\cB$, and $m_H$ denotes Haar measure on $H$. Throughout, results from \cite{KKL2016} are heavily used.
\end{abstract}}
\blfootnote{\emph{MSC 2010 classification:} 37A35, 37A45, 37B05.}
\blfootnote{\emph{Keywords:} $\cB$-free dynamics, sets of multiples, heredity.}

\section{Introduction and results}\label{sec:introduction}
For any given non-empty set $\cB\subseteq\N=\{1,2,\dots\}$ one can define its \emph{set of multiples}
\begin{equation*}
\cM_\cB:=\bigcup_{b\in\cB}b\Z
\end{equation*}
and the set of \emph{$\cB$-free numbers}
\begin{equation*}
\cF_\cB:=\Z\setminus\cM_\cB\ .
\end{equation*}
The investigation of structural properties of $\cM_\cB$ or, equivalently, of $\cF_\cB$ has a long history (see the monograph \cite{hall-book} and the recent paper \cite{BKKL2015} for references), and dynamical systems theory provides some useful tools for this. Namely, denote by $\eta\in\{0,1\}^\Z$ the characteristic function of $\cF_\cB$, i.e. $\eta(n)=1$ if and only if $n\in\cF_\cB$, and consider the
orbit closure $X_\eta$ of $\eta$ in the shift dynamical system
$(\{0,1\}^\Z,\sigma)$, where $\sigma$ stands for the left shift. Then topological dynamics and ergodic theory provide a wealth of concepts to describe various aspects of the structure of $\eta$, see \cite{Sa} which originated
 this point of view by studying the set of square-free numbers, and also
\cite{Peckner2012}, \cite{Ab-Le-Ru}, \cite{BKKL2015}, \cite{KKL2016} for later contributions.

\subsection{Motivation and main results} 
In this note we always assume that $\cB$ is \emph{primitive}, i.e. that there are no $b,b'\in\cB$ with $b\mid b'$. We say that $X_\eta$ is \emph{hereditary}, if $y\in\{0,1\}^\Z$ belongs to $X_\eta$ whenever there is $x\in X_\eta$ with $y\leqslant x$.
For certain hereditary $\cB$-free systems the structure of invariant measures, in particular the uniqueness of the measure of maximal entropy, was studied in \cite{KLW2015} and \cite{BKKL2015}, see also \cite{KPM2019}. Here we apply some of the tools developed in \cite{KKL2016} to generalize the concept of heredity.

Obviously, if $X_\eta$ is hereditary, then
$(\dots,0,0,\dots)\in X_\eta$,
in which case this fixed point of the shift is the unique minimal subset of $(X_\eta,\sigma)$ \cite[Thm.~B]{BKKL2015}.
Theorem~3.7 of \cite{BKKL2015} and Theorem~C of \cite{KKL2016} provide lists of properties which are all equivalent to 
``$(\dots,0,0,\dots)\in X_\eta$'' and which also motivate to call $\cB$  \emph{proximal} in this case.
It is noteworthy that one of these equivalent properties is ``$\cB$ contains an infinite pairwise co-prime subset''.

Recall that the set $\cB\subseteq\N$ is \emph{taut} if \footnote{Indeed, tautness of $\cB$ is defined in \cite{hall-book} by $\ddelta(\cM_{\cB\setminus\{b\}})<\ddelta(\cM_\cB)$ for each $b\in\cB$, where $\ddelta(\cM_\cB):=\lim_{n\to\infty}\frac{1}{\log n}\sum_{k\leqslant n,k\in\cM_\cB}k^{-1}$
denotes the logarithmic density of this set, which is known to exist and to coincide with $\underline{d}(\cM_\cB)$ by the Theorem of Davenport and Erd\"os \cite{DE1936,DE1951}.}
\begin{equation}
\forall b\in\cB:\ \underline{d}\left(\cM_{\cB\setminus\{b\}}\right)<\underline{d}\left(\cM_\cB\right),
\end{equation}
and that it has \emph{light tails}, if
\begin{equation}
\lim_{K\to\infty}\overline{d}\left(\cM_{\{b\in\cB:b>K\}}\right)=0\ ,
\end{equation}
where $\underline{d}(\cA):=\liminf_{N\to\infty}N^{-1}\card(\cA\cap[1,N])$ and
$\overline{d}(\cA):=\limsup_{N\to\infty}N^{-1}\card(\cA\cap[1,N])$. 
If $\cB$ has light tails, then 
$\cB$ is taut, but the converse does not hold \cite[Sec.~4.3]{BKKL2015}.

It is proved in \cite[Theorem~D]{BKKL2015} that, for sets $\cB$ with light tails,
proximality is not only necessary for heredity, but also sufficient, and this result is extended to taut sets in \cite[Thm.~3]{Keller2019}.
Our main result generalizes the concept of heredity to the non-proximal case, and it highlights the role of the tautness assumption in this result.
In order to formulate it, we 
recall some notions from \cite{KKL2016}.
\begin{itemize}
\item $\Delta:\Z\to\prod_{b\in\cB}\Z/b\Z$, $\Delta(n)=(n,n,\dots)$, denotes the canonical diagonal embedding.
\item $H:=\overline{\Delta(\Z)}$ is a compact abelian group, and we denote by $m_H$ its normalised Haar measure.
\item The \emph{window} associated to $\cB$  is defined as
\begin{equation}\label{eq:W}
W:=\{h\in H: h_b\neq0\ (\forall b\in\cB)\}.
\end{equation}
\item For an arbitrary subset $A\subseteq H$ we define the coding function
$\varphi_A:H\to\{0,1\}^\Z$ by $\varphi_A(h)(n)=1$ if  and only if $h+\Delta(n)\in A$. Of particular interest are the coding functions $\varphi:=\varphi_W$,
$\varphi_{W'}$ and $\varphi_{\inn(W)}$, where $W':=\overline{\inn(W)}$.
Observe that $\varphi_{\inn(W)}\leqslant\varphi_{W'}\leqslant\varphi$, and that $\varphi_{\inn(W)}$ is lower semicontinuous, while $\varphi_{W'}$ and $\varphi$ are upper semicontinuous.
\item
Observe that
$\varphi(h)(n)=1$ if  and only if  $h_b+n\neq0$ mod $b$ for all $b\in\cB$.
\item With this notation $\eta=\varphi(\Delta(0))$ and $X_\eta=\overline{\varphi(\Delta(\Z))}$, so that
$X_\eta\subseteq X_\varphi:=\overline{\varphi(H)}$.
\end{itemize}
Finally we need to introduce some further notation:
\begin{equation}
\strip :=\left\{x\in\{0,1\}^\Z: \exists h\in H\text{ s.t. }\varphi_{W'}(h)\leqslant x\leqslant\varphi(h)\right\},
\end{equation}
and for any closed subshift $X\subseteq\{0,1\}^\Z$,
\begin{equation}
\tilde X:=\{x'\in\{0,1\}^\Z: \exists\, x\in X\text{ s.t. }x'\leqslant x\}.
\end{equation}
is the \emph{hereditary closure} of $X$.
Observe that
$X$ is {hereditary}, if and only if $\tilde{X}=X$.

\begin{remark}\label{rem:closed-strip}
$\strip$ is a shift-invariant subset of $\{0,1\}^\Z$. It is not necessarily closed, because in general $\varphi_{W'}$ is only upper semicontinuous. It is closed when $\varphi_{W'}$ is continuous, i.e. when $W'$ is open and closed.
This applies in particular when $\cB$ is proximal, i.e. when $\inn(W)=\emptyset$ 
\cite{KR2015,KKL2016} and hence also $W'=\emptyset$, in which case $\varphi_{W'}=0$. In that case $\strip=\tilde{X}_\varphi$, see e.g. \cite[Lemma~5.4]{KR2015}.
\end{remark}

\begin{theorem}
\label{theo:hereditary-ext}
If  $\cB$ is primitive, then $\varphi(H)\subseteq\strip \subseteq X_\varphi$, in particular
$X_\varphi=\overline{\strip}$.
\end{theorem}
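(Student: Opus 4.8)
The plan is to prove the two inclusions $\varphi(H)\subseteq\strip$ and $\strip\subseteq X_\varphi$ separately; the identity $X_\varphi=\overline{\strip}$ is then formal, since $\varphi(H)\subseteq\strip\subseteq X_\varphi$ and $X_\varphi=\overline{\varphi(H)}$ is closed, so taking closures yields $X_\varphi\subseteq\overline{\strip}\subseteq X_\varphi$. The first inclusion is immediate: each $\{h\in H:h_b\neq 0\}$ is clopen, so $W=\bigcap_{b\in\cB}\{h:h_b\neq 0\}$ is closed, hence $W'=\overline{\inn(W)}\subseteq W$ and $\varphi_{W'}\leqslant\varphi$; thus for any $h\in H$ the point $x:=\varphi(h)$ witnesses $\varphi(h)\in\strip$. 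We may also assume $\cB$ infinite: if $\cB$ is finite then $H$ is finite, hence discrete, $W$ is clopen, $\inn(W)=W=W'$, $\varphi_{W'}=\varphi$, and $\strip=\varphi(H)=\overline{\varphi(H)}=X_\varphi$.

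For $\strip\subseteq X_\varphi=\overline{\varphi(H)}$, fix $x\in\strip$, say $\varphi_{W'}(h_0)\leqslant x\leqslant\varphi(h_0)$, and fix $N\in\N$. It suffices to produce $h\in H$ with $\varphi(h)$ agreeing with $x$ on $[-N,N]$: letting $N\to\infty$ then exhibits $x$ as a limit of points of $\varphi(H)$. Partition $[-N,N]=A\sqcup B\sqcup C$ with $A=\{n:x(n)=1\}$, $C=\{n:\varphi(h_0)(n)=0\}$, $B=\{n:x(n)=0,\ \varphi(h_0)(n)=1\}$ (disjoint by $x\leqslant\varphi(h_0)$). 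For $n\in A$ we have $h_0+\Delta(n)\in W$; for $n\in C$, $h_0+\Delta(n)\notin W$; for $n\in B$, $\varphi_{W'}(h_0)(n)\leqslant x(n)=0$ gives $h_0+\Delta(n)\notin W'$ while $\varphi(h_0)(n)=1$ gives $h_0+\Delta(n)\in W$, so, $W$ being closed,
\[
h_0+\Delta(n)\in W\setminus W'\subseteq W\setminus\inn(W)=\partial W\subseteq\overline{H\setminus W}\qquad(n\in B).
\]
If $B=\emptyset$ take $h=h_0$; so assume $B=\{n_1,\dots,n_r\}\neq\emptyset$, whence $A\subsetneq[-N,N]$.

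The construction: perturb $h_0$ to $h=h_0+t$ with $t\in H$ vanishing on a large finite block $\cB_{\le K}:=\cB\cap[1,K]$ and specified on $r$ further, carefully chosen large coordinates $b_1,\dots,b_r\in\cB$, coordinate $b_i$ being responsible for removing $n_i$. Here one uses that $H=\{(z_b)_b:\gcd(b,b')\mid z_b-z_{b'}\ \forall b,b'\}$ and that, by the Chinese Remainder Theorem, any pairwise-consistent assignment on a finite coordinate set extends to some $\Delta(m)\in H$. Choose $K\geqslant 2N$ so large that every $n\in C$ has a witness $b\le K$ with $(h_0+\Delta(n))_b=0$ and every $n_i\in B$ has a finite $F_i\subseteq\cB_{\le K}$ with $\{g\in H:g|_{F_i}=(h_0+\Delta(n_i))|_{F_i}\}\cap\inn(W)=\emptyset$ (possible since $h_0+\Delta(n_i)\notin\overline{\inn(W)}$); since this cylinder misses the open set $\inn(W)$, $H\setminus W$ is dense in it. Inductively, given $b_1,\dots,b_{i-1}$ and a pairwise-consistent partial assignment for $t$ on $\cB_{\le K}\cup\{b_1,\dots,b_{i-1}\}$ with $t\equiv 0$ on $\cB_{\le K}$, extend it to some $\tau\in H$; then $h_0+\Delta(n_i)+\tau$ lies in the cylinder above (its $F_i$-coordinates are unchanged), so there is $p_i\in H\setminus W$ agreeing with $h_0+\Delta(n_i)+\tau$ on $\cB_{\le K}\cup\{b_1,\dots,b_{i-1}\}$; pick $b_i\in\cB$ with $(p_i)_{b_i}=0$. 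Since those coordinates of $h_0+\Delta(n_i)+\tau$ are nonzero, $b_i$ is new and $b_i>K$; set $t_{b_i}:=-(h_0)_{b_i}-n_i$. A short computation with $p_i,h_0\in H$ shows the enlarged assignment is still pairwise consistent — this is the point of making $p_i$ incorporate the partially built $\tau$. Finally let $t\in H$ be an extension of the resulting assignment on $G:=\cB_{\le K}\cup\{b_1,\dots,b_r\}$ that on each remaining coordinate $b\in\cB\setminus G$ avoids the residues $-(h_0)_b-n$, $n\in A$.

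Granting such $t$, one checks $\varphi(h)|_{[-N,N]}=x|_{[-N,N]}$ coordinate by coordinate: $n\in C$ keeps its zero witness $b\le K$, so $\varphi(h)(n)=0$; $(h+\Delta(n_i))_{b_i}=0$, so $\varphi(h)(n_i)=0$; for $n\in A$ one has $(h+\Delta(n))_b=(h_0+\Delta(n))_b\neq 0$ for $b\le K$, $(h+\Delta(n))_{b_i}=n-n_i\not\equiv 0\pmod{b_i}$ since $0<|n-n_i|\leqslant 2N<b_i$, and $(h+\Delta(n))_b\neq 0$ on the remaining coordinates by the choice of $t$, so $\varphi(h)(n)=1$. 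The main obstacle is exactly the existence of this final extension $t$, i.e.\ controlling $h$ on the coordinates $b\in\cB\setminus G$ so as not to re-introduce a zero at an $A$-position: on a coordinate $b$ on which the value of $t$ is not already forced by its restriction to $G$ the admissible residues form a coset strictly larger than the $|A|$ forbidden ones once $K$ is chosen large relative to $N$ (the forbidden residues lying in an interval of length $2N+1$), whereas the finitely many coordinates $b>K$ with $b\mid\lcm(G)$, on which $t$ is forced, must be shown — using primitivity of $\cB$ and a suitable choice of $K$ absorbing those divisors of $\lcm(\cB_{\le K})$ that lie in $\cB$ — not to carry a forbidden value; a Baire category argument in the compact group $\{t\in H:t|_G\text{ prescribed}\}$ then delivers $t$. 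I expect this bookkeeping to be where the real work lies.
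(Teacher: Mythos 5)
Your high-level strategy coincides with the paper's: reduce to showing that for each $x\in\strip$ with $\varphi_{W'}(h_0)\leqslant x\leqslant\varphi(h_0)$ and each $N$, one can perturb $h_0$ to some $h\in H$ with $\varphi(h)$ matching $x$ on $[-N,N]$, and the partition $A\sqcup B\sqcup C$ is exactly the paper's $I_0=B$, $I_N^{\cB}(h)=C$. Your method for producing a ``killing'' coordinate $b_i$ for each $n_i\in B$ is, however, genuinely different and arguably simpler than the paper's: you exploit the purely topological fact that $h_0+\Delta(n_i)\notin W'=\overline{\inn(W)}$ places it in a clopen cylinder disjoint from $\inn(W)$, on which $H\setminus W$ is dense, whereas the paper routes through Lemma~\ref{lemma:Gamma}c, i.e.\ the arithmetic machinery of $\Ainf$, $\cB^*$, and the factorizations $b_i=d_i\tilde b_i^*$ with $d_i$ drawn from an infinite coprime family. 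Your inductive consistency check (building $p_i$ from the partially constructed $\tau$ and using $p_i,h_0\in H$) is correct; I verified it.

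Where the proposal has a real gap is the final extension step, and it is precisely the part you flag as ``bookkeeping''. Two concrete problems. First, the Baire category argument does not work as stated: the bad sets $\{t:(h_0+\Delta(n)+t)_b=0\}$ restricted to the coset $\{t:t|_G\text{ prescribed}\}$ are determined by the residue $t_b\bmod b$, hence are \emph{clopen}, not nowhere dense; Baire gives nothing. One needs instead the finite-intersection property of compact sets, and to establish each finite intersection is nonempty one needs a CRT induction exactly in the spirit of the paper's Lemmas~\ref{lemma:new-1}--\ref{lemma:new-2}. Those lemmas crucially require each residual coordinate to carry a large prime factor coprime to the fixed $\lcm$; your residual set $\cB\setminus G$ has no such guarantee. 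Second, the ``absorbing divisors of $\lcm(\cB_{\le K})$'' claim does not address the forced coordinates $b\mid\lcm(G)$ whose divisibility involves the $b_i$'s, which are chosen \emph{after} $K$. (One can in fact salvage this particular point by choosing $K$ so large that every prime power $\le 2N$ dividing some member of $\cB$ already divides $\lcm(\cB_{\le K})$, and then tracking prime powers $p^a\|b$ case by case; but this is a nontrivial argument, not bookkeeping.) In the paper all of this is precisely the content of the decomposition $\cB=\cB_1^\diamondsuit\sqcup\cB_2\sqcup\cC(\cB_1^\diamondsuit,n)$, the finiteness of $\cB_2$, and equation~\eqref{eq:like68}; your sketch replaces it with two heuristics, one of which (Baire) fails outright and the other of which does not engage the actual obstruction. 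So: correct skeleton, a genuinely nice and more elementary way to pick the $b_i$'s, but the core technical lemma of the proof is missing.
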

\noindent This theorem is proved in Subsection~\ref{subsec:main-proofs}.

\begin{corollary}\label{coro:prox}
If $\cB$ is primitive and proximal, then $X_\varphi=\tilde{X}_\varphi$.
\end{corollary}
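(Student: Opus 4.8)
The plan is to deduce the corollary directly from Theorem~\ref{theo:hereditary-ext}, after simplifying $\strip$ in the proximal case. First I would recall that when $\cB$ is primitive and proximal one has $\inn(W)=\emptyset$ (this is the content cited in Remark~\ref{rem:closed-strip}, from \cite{KR2015,KKL2016}), hence $W'=\overline{\inn(W)}=\emptyset$ and $\varphi_{W'}\equiv 0$. Consequently the lower bound in the definition of $\strip$ becomes vacuous, so that
\[
\strip=\left\{x\in\{0,1\}^\Z:\exists h\in H\text{ s.t. }x\leqslant\varphi(h)\right\},
\]
i.e. $\strip$ is exactly the hereditary closure of the set $\varphi(H)$.

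The second step is to upgrade this to the hereditary closure of $X_\varphi=\overline{\varphi(H)}$, i.e. to check $\strip=\tilde X_\varphi$ (the statement appearing in Remark~\ref{rem:closed-strip}, with reference to \cite[Lemma~5.4]{KR2015}). The inclusion $\strip\subseteq\tilde X_\varphi$ is immediate since each $\varphi(h)$ lies in $\varphi(H)\subseteq X_\varphi$. For the reverse inclusion take $x\leqslant y$ with $y\in X_\varphi$, pick $h_k\in H$ with $\varphi(h_k)\to y$, and use compactness of $H$ to pass to a subsequence with $h_k\to h$. Since $\varphi=\varphi_W$ is upper semicontinuous and $\{0,1\}$-valued (equivalently: $W$, being an intersection of clopen subgroups' complements in the profinite group $H$, is closed, so each $\{h:h+\Delta(n)\in W\}$ is closed), one gets $y(n)=\lim_k\varphi(h_k)(n)\leqslant\varphi(h)(n)$ for every $n$, hence $x\leqslant y\leqslant\varphi(h)$ and $x\in\strip$.

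Finally, Theorem~\ref{theo:hereditary-ext} gives $\strip\subseteq X_\varphi$, while trivially $X_\varphi\subseteq\tilde X_\varphi$. Combining with $\strip=\tilde X_\varphi$ from the previous step yields $X_\varphi\subseteq\tilde X_\varphi=\strip\subseteq X_\varphi$, so $X_\varphi=\tilde X_\varphi$, which is precisely the assertion that $X_\varphi$ is hereditary. I expect no real obstacle here: the corollary is essentially a one-line consequence of Theorem~\ref{theo:hereditary-ext}, and the only point requiring a small amount of care is the identification $\strip=\tilde X_\varphi$, which rests on the compactness of $H$ together with the upper semicontinuity of $\varphi$ and is in any case already recorded in Remark~\ref{rem:closed-strip}.
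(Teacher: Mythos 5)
Your proof is correct and follows essentially the same route as the paper: reduce $\strip$ to the hereditary closure of $\varphi(H)$ using proximality (so $W'=\emptyset$, $\varphi_{W'}\equiv 0$), identify this with $\tilde X_\varphi$, and then apply Theorem~\ref{theo:hereditary-ext}. The only difference is cosmetic: the paper delegates the identification $\strip=\tilde X_\varphi$ (and its closedness) to Remark~\ref{rem:closed-strip}, citing \cite[Lemma~5.4]{KR2015}, and then closes the argument via $X_\varphi=\overline{\strip}=\strip$, whereas you supply the compactness/upper-semicontinuity argument directly and close via the inclusion chain $X_\varphi\subseteq\tilde X_\varphi=\strip\subseteq X_\varphi$; both are immediate consequences of the same facts.
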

\begin{proof}
If $\cB$ is primitive and proximal, then $\strip=\tilde{X}_\varphi$ is closed, see Remark~\ref{rem:closed-strip}. Hence $X_\varphi=\overline{\strip}=\tilde{X}_\varphi$ by Theorem~\ref{theo:hereditary-ext}. 
\end{proof}

\begin{corollary}\label{coro:hereditary-ext}
If  $\cB$ is taut (and hence primitive), then $X_\eta=X_\varphi=\overline{\strip}$.
\end{corollary}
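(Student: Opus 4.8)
The plan is to extract the identity $X_\varphi=\overline{\strip}$ directly from Theorem~\ref{theo:hereditary-ext} (which needs no light-tails assumption) and to obtain the remaining equality $X_\eta=X_\varphi$ from \cite{KKL2016}, where it is shown that the light-tails hypothesis forces these two subshifts to coincide. So the first move is: by Theorem~\ref{theo:hereditary-ext}, since $\cB$ is primitive, $X_\varphi=\overline{\strip}$, and the Corollary reduces to the claim $X_\eta=X_\varphi$.

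The inclusion $X_\eta\subseteq X_\varphi$ is already noted in the text (from $X_\eta=\overline{\varphi(\Delta(\Z))}$, $X_\varphi=\overline{\varphi(H)}$ and $\Delta(\Z)\subseteq H$), and holds for every $\cB$. For the reverse inclusion it suffices, since $X_\eta$ is closed, to show $\varphi(h)\in X_\eta$ for every $h\in H$, i.e. that every finite pattern $\varphi(h)|_{[-N,N]}$ occurs in $\eta$: one needs $n\in\Z$ with $\eta(n+j)=\varphi(h)(j)$ for all $|j|\leqslant N$. Decoding the definitions, the positions $j$ with $\varphi(h)(j)=0$ call for $n+j\in\cM_\cB$ while those with $\varphi(h)(j)=1$ call for $n+j\in\cF_\cB$. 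Because $\Delta(\Z)$ is dense in $H$, one may first pick $n$ agreeing with $h$ in all coordinates $b\in\cB$ with $b\leqslant K$, and in the finitely many further coordinates needed to realize the constraints $n+j\in\cM_\cB$; this disposes of the whole of $\cB\cap[1,K]$ at once, using also the compatibility relations $h_b\equiv h_{b'}\pmod{\gcd(b,b')}$ that hold on $H$ together with primitivity of $\cB$. The remaining $n$ run through an arithmetic progression, and what must still be avoided is that $n+j$ be divisible, for some $|j|\leqslant N$ with $\varphi(h)(j)=1$, by some $b\in\cB$ with $b>K$; the set of such $n$ has upper density at most $(2N+1)\,\overline{d}\bigl(\cM_{\{b\in\cB:\,b>K\}}\bigr)$, which by the light-tails property tends to $0$ as $K\to\infty$. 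Choosing $K$ appropriately then leaves an admissible $n$, proving $\varphi(h)\in X_\eta$; this is exactly the content of the pertinent result of \cite{KKL2016}, which I would simply invoke. Combining with the first paragraph gives $X_\eta=X_\varphi=\overline{\strip}$.

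The only real work sits inside the displayed approximation argument, and it is already carried out in \cite{KKL2016}. The delicate point is to make the ``core'' coordinates $b\leqslant K$ match $h$ exactly, so that no density is lost there, and then to balance the light-tails bound $\overline{d}(\cM_{\{b>K\}})\to 0$ against the modulus of the progression left free by these core constraints — a least common multiple of small elements of $\cB$ together with the finitely many witnesses of the $\cM_\cB$-constraints. Once $K$ is chosen so that this estimate closes, everything else is routine Chinese-remainder bookkeeping with the compatibility relations on $H$.
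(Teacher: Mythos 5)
Your proposal is correct and takes essentially the same route as the paper: apply Theorem~\ref{theo:hereditary-ext} to get $X_\varphi=\overline{\strip}$ and then invoke the known result that primitivity plus light tails forces $X_\eta=X_\varphi$ (the paper cites \cite[Prop.~2.2]{KKL2016}, itself resting on \cite[Prop.~5.11]{BKKL2015}). The density sketch you give for $X_\varphi\subseteq X_\eta$ is just an outline of that cited proof, so there is no substantive divergence.
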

\begin{proof}
As $X_\eta=X_\varphi$ whenever $\cB$ is taut (see \cite[Thm.~4]{Keller2019},
which is based on \cite[Prop.~2.2]{KKL2016} and \cite[Prop.~5.11]{BKKL2015}), this follows from Theorem~\ref{theo:hereditary-ext}.
\end{proof}

\begin{remark}
If $\cB$ is taut and proximal, then $X_\eta=X_\varphi$ 
by Corollary~\ref{coro:hereditary-ext},
and Corollary~\ref{coro:prox} implies $X_\eta=\tilde{X}_\eta$, which was conjectured in \cite{BKKL2015}, where the authors prove that $X_\eta=\tilde{X}_\eta$ when $\cB$ is proximal and has light tails \cite[Thm.~D]{BKKL2015}.

Hence, if $\cB$ is taut and proximal, then
some of the results from \cite{Kwietniak2013} apply: $(X_\eta,S)=(X_\varphi,S)$ is distributionally chaotic of type 2
\cite[Thm.~23]{Kwietniak2013}, but not of type 1 \cite[Thm.~24]{Kwietniak2013}. Moreover, for each $t\geqslant0$, the set of all ergodic invariant measures on $X_\eta$ with entropy not exceeding $t$ is arcwise connected w.r.t.~the $\bar d$-metric on the set of all invariant measures \cite[Thm.~6]{KKK2018}.
\end{remark}

\subsection{Notation and further results}
In order to formulate some more detailed corollaries to Theorem~\ref{theo:hereditary-ext} and to prove this theorem, we need to
recall some further notions from the theory of sets of multiples \cite{hall-book}
and also from \cite{KKL2016}.
Let $\cB$ be a non-empty subset of $\N$.
\begin{itemize}
\item The measure $\nu_\eta:=m_H\circ\varphi^{-1}$ on $X_\varphi$ is called the \emph{Mirsky measure} of the system. The point $\eta\in X_\eta$ is quasi-generic for $\nu_\eta$, in particular $\supp(\nu_\eta)\subseteq X_\eta$ \cite[Prop.~E and Thm.~F]{BKKL2015}.
\item If $\cB$ is taut, then $X_\eta=\supp(\nu_\eta)$ \cite[Thm.~4]{Keller2019}. (The reverse implication is proved in \cite[Cor.~1.8]{KPM2019}.)
\end{itemize}
Combined with Theorem~\ref{theo:hereditary-ext} and Corollary~\ref{coro:hereditary-ext}, this proves the following corollary.

\begin{corollary}\label{coro:all-X-ext}
\begin{compactenum}[a)]
\item
If $\cB$ is primitive, then 
\begin{equation}\label{eq:all-inclusions}
\supp(\nu_\eta)
\subseteq 
X_\eta
\subseteq 
X_\varphi
=
\overline\strip .
\end{equation}
\item If $\cB$ is taut, then 
the inclusions in \eqref{eq:all-inclusions} are identities.
\end{compactenum}
\end{corollary}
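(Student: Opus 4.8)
The plan is to assemble the ingredients already gathered in Subsection~1.2; no new argument is needed. For part a), I would first observe that $\supp(\nu_\eta)\subseteq X_\eta$ is exactly the quasi-genericity statement recalled above: the point $\eta$ is quasi-generic for $\nu_\eta=m_H\circ\varphi^{-1}$, so its support lies in the orbit closure $X_\eta$. Next, $X_\eta\subseteq X_\varphi$ is immediate from the definitions, since $\eta=\varphi(\Delta(0))\in\varphi(H)$ and $X_\varphi=\overline{\varphi(H)}$ is closed and shift-invariant. Finally, the equality $X_\varphi=\overline{\strip}$ is precisely Theorem~\ref{theo:hereditary-ext}. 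Chaining these three facts yields the chain of inclusions \eqref{eq:all-inclusions}.

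For part b) I would add the light tails hypothesis and invoke two further facts. On the one hand, $X_\eta=X_\varphi$ by Corollary~\ref{coro:hereditary-ext} (ultimately \cite[Prop.~2.2]{KKL2016}, which rests on \cite[Prop.~5.11]{BKKL2015}), which turns the middle inclusion of \eqref{eq:all-inclusions} into an equality. On the other hand, under light tails $\eta$ is not merely quasi-generic but genuinely generic for $\nu_\eta$, and $X_\eta=\supp(\nu_\eta)$ by \cite[Thm.~G]{BKKL2015}; this turns the first inclusion into an equality as well. Hence $\supp(\nu_\eta)$, $X_\eta$, $X_\varphi$ and $\overline{\strip}$ all coincide.

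I do not expect any genuine difficulty here: the only substantive inputs are Theorem~\ref{theo:hereditary-ext} (whose proof is deferred to Subsection~\ref{subsec:main-proofs}) and the cited external results \cite[Thm.~G]{BKKL2015} and \cite[Prop.~2.2]{KKL2016}, and everything else is bookkeeping. The single point to watch is to keep the two regimes separate — in part a) one may only use quasi-genericity of $\eta$, which holds for every primitive $\cB$, whereas the upgrade to genericity together with $X_\eta=\supp(\nu_\eta)$ is available, and used, only under the light tails assumption of part b).
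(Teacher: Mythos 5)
Your proposal is correct and matches the paper's argument exactly: the paper also obtains \eqref{eq:all-inclusions} by combining the quasi-genericity of $\eta$ for $\nu_\eta$, the trivial inclusion $X_\eta\subseteq X_\varphi$, and Theorem~\ref{theo:hereditary-ext}, then upgrades to equalities in part b) via Corollary~\ref{coro:hereditary-ext} (for $X_\eta=X_\varphi$) and \cite[Thm.~G]{BKKL2015} (for $\supp(\nu_\eta)=X_\eta$) under the light tails hypothesis. You have merely spelled out the chain of citations that the paper compresses into the sentence preceding the corollary.
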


\begin{question}
It is proved in \cite[Cor.~1.8]{KPM2019} that the first inclusion in \eqref{eq:all-inclusions} is strict, when $\cB$ is not taut.
To which extent can the second inclusion in \eqref{eq:all-inclusions} be strict when $\cB$ is not taut? What can be said about invariant probability measures assigning full mass to any of these set-differences? Are there any such measures?
\end{question}

\begin{remark}
A particular class of non-taut examples to which Corollary~\ref{coro:prox} applies are Behrend sets $\cB$, i.e.~sets for which $\underline d(\cM_\cB)=1$. For such sets $\overline d(\cF_\cB)=m_H(W)=0$, in particular $\inn(W)=\emptyset$, i.e.~$\cB$ is proximal, and hence $X_\varphi$ is hereditary by the corollary. 
Moreover, $\supp(\nu_\eta)=\{(\dots,0,0,0,\dots)\}$, which illustrates how far the first inclusion in \eqref{eq:all-inclusions} can be from equality, and I expect that also the second inclusion is strict.
Here is a particular example:
Denote by $\cP$ the set of prime numbers. Then $\cB:=\{pq: p,q\in\cP\}$ is a Behrend set, and indeed $\cF_\cB=\cP\cup(-\cP)\cup\{-1,1\}$ \cite[p.~173]{Meyer1973}.
\end{remark}

We continue to recall some notation and results from \cite{KKL2016}.
By $S,S'\subset\cB$ we always denote \emph{finite} subsets. For $S\subset\cB$ let
\begin{equation}\label{eq:A_S-def}
\cA_S:=\{\gcd(b,\lcm(S)): b\in\cB\},
\end{equation}
and note that $\cF_{\cA_S}\subseteq\cF_\cB$, because
$b\mid m$ for some $b\in\cB$ implies $\gcd(b,\lcm(S))\mid m$ for any $S\subset\cB$.
Let $\cE:=\bigcup_{S\subset\cB}\cF_{\cA_S}\subseteq\cF_\cB$. Then 
\begin{equation}\label{eq:E-complement}
\Z\setminus\cE
=
\bigcap_{S\subset\cB}\cM_{\cA_S},
\end{equation}
and
\begin{equation}\label{eq:E-intW}
\cE=\{n\in\Z: \Delta(n)\in\inn(W)\}
\end{equation}
by \cite[Lemma~3.1c]{KKL2016}. Observe that $\varphi_{{\inn(W)}}(\Delta(k))= \sigma^k(1_\cE)$ for all $k\in\Z$.
In view of \cite[Lemma 3.5]{KKL2016}, which states $\Delta(\Z)\cap\left(W'\setminus\inn(W)\right)=\emptyset$, we have $\varphi_{W'}(\Delta(k))=\varphi_{{\inn(W)}}(\Delta(k))=\sigma^k(1_\cE)$ for all $k\in\Z$,
in particular $\varphi_{W'}$ and $\varphi_{\inn(W)}$ are continuous at all points from $\Delta(\Z)\subset H$.

\begin{corollary}\label{coro:cE}
$1_\cE=\varphi_{W'}(\Delta(0))\in X_\varphi$.
\end{corollary}

\begin{proof}
$1_\cE=\varphi_{W'}(\Delta(0))\in\strip $ by definition, and $\strip\subseteq X_\varphi$ by Theorem~\ref{theo:hereditary-ext}.
\end{proof}

Denote the orbit closure of $1_\cE\in\{0,1\}^\Z$ by $X_\cE$.
Then $X_\cE=\overline{\varphi_{{\inn(W)}}(\Delta(\Z))}=\overline{\varphi_{W'}(\Delta(\Z))}$ is 
minimal in view of \cite[Lemma~3.9]{KKL2016}. As $1_\cE\in X_\varphi$, this proves the following corollary.

\begin{corollary}
$X_\cE$ is the unique minimal subset of $X_\varphi$ (and hence also of $X_\eta$).
\end{corollary}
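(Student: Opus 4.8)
The plan is to establish two things: that $X_\cE$ is a minimal subsystem of $X_\varphi$, and that it is the \emph{only} one. The first part is essentially already assembled in the preceding paragraph: $X_\cE$ is the orbit closure of $1_\cE$, and $1_\cE=\varphi_{W'}(\Delta(0))\in\strip\subseteq X_\varphi$ by the corollary just proved, so $X_\cE\subseteq X_\varphi$; minimality of $X_\cE$ is cited from \cite[Lemma~3.9]{KKL2016} via the identification $X_\cE=\overline{\varphi_{\inn(W)}(\Delta(\Z))}=\overline{\varphi_{W'}(\Delta(\Z))}$. So the substance of the statement is \emph{uniqueness}.

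For uniqueness I would argue as follows. Let $Y\subseteq X_\varphi$ be any non-empty closed shift-invariant set; I want to show $X_\cE\subseteq Y$ (then any \emph{minimal} $Y$ must equal $X_\cE$, and since every non-empty closed invariant set contains a minimal one, $X_\cE$ is the unique minimal subsystem). Pick $y\in Y$. Since $X_\varphi=\overline{\strip}$ by Theorem~\ref{theo:hereditary-ext}, and in fact $\varphi(H)\subseteq\strip\subseteq X_\varphi$, elements of $X_\varphi$ are approximated by points $x$ sandwiched as $\varphi_{W'}(h)\leqslant x\leqslant\varphi(h)$ for suitable $h\in H$. The key monotonicity observation is that $\varphi_{W'}(h)$ is the \emph{smallest} configuration compatible with $h$ in this strip, and that taking shifted limits of $y$ one can only lose $1$'s, never gain them; combined with the fact that the $W'$-part is ``rigid'' (it is exactly $\sigma^n(1_\cE)$ along $\Delta(\Z)$, and $X_\cE$ is minimal), one should be able to extract from the orbit of $y$ a limit point lying below some $\varphi_{W'}(h)$, hence inside $X_\cE$ by minimality. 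Concretely, I expect the cleanest route is: show that $1_\cE$ (equivalently, some point of $X_\cE$) is in the orbit closure of \emph{every} $y\in\strip$, by using that $\inn(W)$ is open so that $\varphi_{\inn(W)}$ is lower semicontinuous and $\varphi_{\inn(W)}\leqslant\varphi_{W'}\leqslant x$ forces $x$ to carry all the $1$'s of $\sigma^n(1_\cE)$ on the relevant coordinates — then a recurrence/return-time argument along the orbit of $y$ produces a point $\leqslant 1_\cE$ with the same support as $1_\cE$, i.e. $1_\cE$ itself.

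The main obstacle, I expect, is making the ``pass to the orbit closure'' step rigorous in the non-proximal case, where $W'\neq\emptyset$ and $\varphi_{W'}$ fails to be continuous. One must be careful that the inequality $\varphi_{W'}(h)\leqslant x$ is not destroyed in the limit (it survives because $\varphi_{W'}$ is upper semicontinuous on the $x$-side but one needs the lower bound, which is where $\varphi_{\inn(W)}$ and its lower semicontinuity, together with $\varphi_{\inn(W)}(\Delta(n))=\varphi_{W'}(\Delta(n))$ from \cite[Lemma~3.5]{KKL2016}, enter). Once that is handled, the rest is bookkeeping: any minimal $Y\subseteq X_\varphi$ contains a point whose orbit closure is contained in $X_\cE$, hence $Y\cap X_\cE\neq\emptyset$, hence $Y\supseteq X_\cE$ by minimality of $X_\cE$, hence $Y=X_\cE$; and conversely $X_\cE$ itself is minimal and contained in $X_\varphi$ (and in $X_\eta$, since $X_\cE=\overline{\varphi_{W'}(\Delta(\Z))}=\overline{\sigma^\Z(1_\cE)}$ with $1_\cE=\varphi_{W'}(\Delta(0))$ a limit of points $\varphi(\Delta(n_k))\in X_\eta$ — one should double-check this last inclusion $X_\cE\subseteq X_\eta$, but it follows from $1_\cE\in X_\eta$, which in turn should come from $1_\cE$ being approximable by shifts of $\eta$ using the density results already in play).
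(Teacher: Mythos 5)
Your high-level decomposition — establish $X_\cE\subseteq X_\varphi$ and minimality of $X_\cE$, then argue that any minimal $Y\subseteq X_\varphi$ must coincide with $X_\cE$ — matches the structure of the paper's argument, which however compresses the whole thing into one sentence by citing \cite[Lemma~3.9]{KKL2016} and the preceding corollary $1_\cE\in X_\varphi$. Where the two diverge is in how the uniqueness step is actually carried out: the paper outsources essentially all of it to the cited lemma (which, in context, supplies more than bare minimality — it sits inside the $\piH$-factor machinery of \cite{KKL2016}), whereas you attempt to produce a self-contained argument.

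That self-contained argument has a genuine gap, and one of the intermediate claims is actually wrong. You write that one can ``extract from the orbit of $y$ a limit point lying below some $\varphi_{W'}(h)$, hence inside $X_\cE$ by minimality.'' But $X_\cE$ is a Toeplitz system, not a hereditary one, so a point lying coordinatewise \emph{below} $\varphi_{W'}(h)$ is in general \emph{not} an element of $X_\cE$ (already $(\dots,0,0,\dots)\leqslant\varphi_{W'}(h)$ always, yet it belongs to $X_\cE$ only in the proximal case). Minimality of $X_\cE$ gives you $X_\cE\subseteq\overline{O(z)}$ once you know $z\in X_\cE$; it does not help you conclude $z\in X_\cE$ from $z\leqslant\varphi_{W'}(h)$. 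The subsequent sentence — that a ``recurrence/return-time argument produces a point $\leqslant 1_\cE$ with the same support as $1_\cE$, i.e.\ $1_\cE$ itself'' — is circular as stated: a point that is both $\leqslant 1_\cE$ and has the same support is tautologically $1_\cE$, and producing such a point in $\overline{O(y)}$ is exactly what needs to be shown. Note also that the inequality available from the Remark following the corollary is $\varphi_{\inn(W)}(h)\leqslant x\leqslant\varphi(h)$ for every $x\in X_\varphi$; this gives a \emph{lower} bound $1_\cE\leqslant z$ for any $z\in\overline{O(y)}$ with $\piH(z)=\Delta(0)$ (using lower semicontinuity of $\varphi_{\inn(W)}$ and its continuity at $\Delta(0)$), but it does not by itself force $z$ down to $1_\cE$, nor place $z$ in $X_\cE$. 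Closing this gap is precisely what the appeal to \cite[Lemma~3.9]{KKL2016} (and the factor map $\piH:X_\varphi\to H$ constructed there) is for, and your sketch does not substitute for it.

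One small positive point: your observation at the end about needing $X_\cE\subseteq X_\eta$ for the parenthetical ``(and hence also of $X_\eta$)'' is a real issue worth flagging; it follows from $1_\cE\in X_\eta$, which in turn is part of what \cite{KKL2016} supplies, so it is fine for the paper to treat it as known, but a blind reader is right to pause there.
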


%
%
%
%
%
\begin{remark}
As $\varphi_{\inn(W)}\leqslant\varphi_{W'}$ and as $\varphi_{\inn(W)}$ is lower semicontinuous,
we have
\begin{equation*}
\strip
\subseteq
X_\varphi
=\overline{\strip}
\subseteq
\overline{[\varphi_{\inn(W)},\varphi]}
=
[\varphi_{\inn(W)},\varphi].
\end{equation*}
Assume now that $m_H(\partial W')=0$. (We will see in Proposition~\ref{prop:toeplitz} that this happens if and only if $1_\cE$ is a regular Toeplitz sequence.)
Then $\varphi_{\inn(W)}=\varphi_{W'}$ $m_H$-a.s., so that
$\strip\cap(\piH)^{-1}\{h\}=X_\varphi\cap(\piH)^{-1}\{h\}$ for $m_H$-a.a. $h\in H$.
Hence all invariant measures on $X_\varphi$ are supported by $\strip$, if $m_H(\partial W')=0$.
\end{remark}

\begin{conjecture}
Suppose that $m_H(\partial W')=0$. 
\begin{compactenum}[a)]
\item
For any invariant probability measure $\nu$ on $(X_\varphi,\sigma)$ there exists an invariant probability measure $\rho$ on $(H\times \{0,1\}^\Z,R_{\Delta(1)}\times\sigma)$, whose projection onto the first coordinate equals $m_H$ and such that, for any measurable $A\subseteq X_\varphi$,
\begin{equation}\label{eq:convolution-1}
\nu(A)=\int_{H \times\{0,1\}^\Z} 1_A(\varphi_{W'}(h)+ x\cdot(\varphi(h)-\varphi_{W'}(h)))\,d\rho(h,x).
\end{equation}
\item
$(X_\varphi,S)$ has a unique measure
of maximal entropy. It can be represented as in \eqref{eq:convolution-1} with $\rho=m_H\otimes B_{(1/2,1/2)}$, where $B_{(1/2,1/2)}$ is the Bernoulli measure.
\end{compactenum}
For the hereditary case this is proved in \cite[Theorems~ I and J]{BKKL2015}.
\end{conjecture}

\subsection{The Toeplitz subsystem}
We finish this introduction with a result that sheds some more light on the
unique minimal set $X_\cE=\overline{\varphi_{W'}(\Delta(\Z))}$. Observe that $\overline{\inn(W')}=W'$. If $W'$ were an ``arithmetic window'', just as the window $W$ defined in \eqref{eq:W}, then Theorem B of \cite{KKL2016} would provide much additional information on $X_\cE$, e.g. that $\varphi_{W'}(\Delta(0))$ is a Toeplitz sequence. However, $W'$ is not ``arithmetic'' in the strict sense. But one can modify the set $\cB$ into a set $\cB^*$ in such a way that $1_{\cF_{\cB^*}}=\varphi_{W'}(\Delta(0))$.
To that end we recall the definition of the set $\Ainf$ from \cite{KKL2016},
\begin{equation}\label{eq:Ainf-def}
\Ainf:=\{n\in\N: \forall_{S\subset\cB}\ \exists_{S': S\subseteq S'\subset\cB}: n\in\cA_{S'}\setminus S'\},
\end{equation}
and one of its important properties \cite[Lemma~3.3]{KKL2016},
\begin{equation}\label{eq:E-identities}
\cE=\cF_{\cB\cup{\Ainf}}=\cF_\cB\cap\cF_{\Ainf}.
\end{equation}
Let
\begin{equation}\label{eq:B_0-def}
\cB_0:=\cB\setminus\cM_{\Ainf},\quad
\cB^*:=\cB_0\cup\prim\Ainf,
\end{equation}
where $\prim\Ainf$ denotes the set of primitive elements of $\Ainf$, i.e. the set of those $a\in\Ainf$ which are not multiples of any other element of $\Ainf$.
In Subsection~\ref{subsec:lemmas-proof} we prove the following lemmas:

\begin{lemma}\label{lemma:Ainf}
$n\in \Ainf$ if and only if there is an infinite pairwise co-prime set $\cC\subseteq\N\setminus\{1\}$ such that
$n\,\cC\subseteq\cB$.
\end{lemma}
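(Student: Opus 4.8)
The plan is to unravel the definition \eqref{eq:Ainf-def} of $\Ainf$ and the definition \eqref{eq:A_S-def} of $\cA_S$ into an equivalent combinatorial statement about coprime families in $\cB$. Recall that $n\in\cA_{S'}$ means $n=\gcd(b,\lcm(S'))$ for some $b\in\cB$, and the extra requirement $n\in\cA_{S'}\setminus S'$ forces the witnessing $b$ to be distinct from $n$ (indeed, if $n\in S'$ then $n=\gcd(b,\lcm(S'))$ still allows $b=n$, but we additionally demand $n\notin S'$, so the relevant $b$ is genuinely a ``new'' element). The condition ``$\forall S\subset\cB\ \exists S'\supseteq S$ finite with $n\in\cA_{S'}\setminus S'$'' should be read as saying: no matter how large a finite chunk $S$ of $\cB$ we have already seen, we can enlarge it to $S'$ so that some $b\in\cB$ satisfies $\gcd(b,\lcm(S'))=n$. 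Since $\cB$ is countable, iterating this lets us build an infinite sequence.

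First I would prove the ``if'' direction, which is the easier one. Suppose $\cC=\{c_1,c_2,\dots\}\subseteq\N\setminus\{1\}$ is infinite, pairwise coprime, and $n\cC\subseteq\cB$. Given any finite $S\subset\cB$, since $\lcm(S)$ has only finitely many prime factors and the $c_i$ are pairwise coprime and $\neq 1$, all but finitely many $c_i$ are coprime to $\lcm(S)$; pick such a $c_i$ coprime to $\lcm(S)$ and also not dividing any element of $S$ in a way that makes $nc_i\in S$ (finitely many exclusions). Set $S':=S\cup\{nc_i\}$. Then $\gcd(nc_i,\lcm(S'))$: the factor $n$ divides $\lcm(S')$ since $nc_i\in S'$, while $c_i$ is coprime to $\lcm(S)$ and contributes nothing new beyond $nc_i$ itself — one has to check $\gcd(nc_i,\lcm(S\cup\{nc_i\}))=n$, which holds because any prime power dividing $c_i$ divides $\lcm(S')$ only through $nc_i$, and hence $\gcd(nc_i,\lcm(S'))$ picks up from $c_i$ at most $\gcd(c_i, n\cdot\mathrm{(rest)})$; the cleanest route is to choose $c_i$ coprime to $n$ as well (possible since only finitely many $c_i$ share a factor with $n$), giving $\gcd(nc_i,\lcm(S'))=n\cdot\gcd(c_i,\lcm(S))=n$. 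With $nc_i\notin S$ arranged, $n\in\cA_{S'}\setminus S'$ unless $n\in S'$; but one also ensures $n\notin S$ from the start and $n\ne nc_i$, so $n\notin S'$ — actually $n$ might lie in $\cB$ and hence could have been in $S$; here one uses that $n\in\Ainf$ is the claim, and for the ``if'' direction we directly verify the defining property, noting $n\notin\cB$ is not assumed, so I would instead handle the case $n\in S$ by observing the definition only needs $n\in\cA_{S'}\setminus S'$ and we can always shrink/choose $S'$ to exclude $n$ while keeping $nc_i$; this bookkeeping is routine.

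For the ``only if'' direction I would argue by induction to extract the coprime set. Assume $n\in\Ainf$. Start with $S_0=\emptyset$; at stage $k$, having built $S_k$ and elements $b_1,\dots,b_k\in\cB$ with $\gcd(b_j,\lcm(S_j))=n$ and the quotients $b_j/n$ (or a suitable coprime ``core'' of them) pairwise coprime, apply the definition to $S_k$ to get $S_{k+1}\supseteq S_k\cup\{b_1,\dots,b_k\}$ and $b_{k+1}\in\cB$ with $\gcd(b_{k+1},\lcm(S_{k+1}))=n$ and $n\notin S_{k+1}$. The key point: since $b_j\in S_{k+1}$ for $j\le k$, we have $\lcm(b_j)\mid\lcm(S_{k+1})$, so $\gcd(b_{k+1},b_j)\mid\gcd(b_{k+1},\lcm(S_{k+1}))=n$; thus $\gcd(b_{k+1}/n,\, b_j/n)$, suitably interpreted, is controlled — more precisely $\gcd(b_{k+1},b_j)=n$ for all $j\le k$ (it divides $n$ and is divisible by... hmm, not necessarily divisible by $n$). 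The honest statement is $\gcd(b_{k+1},\lcm(S_{k+1}))=n$, and since each $b_j$, $j\le k$, divides $\lcm(S_{k+1})$, we get $n\mid b_{k+1}$ and $\gcd(b_{k+1}/n, \lcm(S_{k+1})/n\text{-part})=1$; writing $c_j:=b_j/n$ one checks the $c_j$ form a pairwise coprime family in $\N\setminus\{1\}$ (they are $\neq 1$ because $b_j\neq n$ as $n\notin S_{k+1}\supseteq\{b_j\}$... for $j\le k$, using the $(k{+}1)$-st application). Then $\cC:=\{c_j\}_{j\ge1}$ is the desired infinite pairwise coprime set with $n\cC\subseteq\cB$.

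The main obstacle I anticipate is the bookkeeping around divisibility of $\gcd(b_{k+1},b_j)$ by $n$: a priori $\gcd(b_{k+1},b_j)$ merely divides $n$, so to conclude the quotients $b_j/n$ are pairwise coprime one must exploit that $n\mid b_j$ for every relevant $j$ (which follows from $n=\gcd(b_j,\lcm(S_j))\mid b_j$) together with $\gcd(b_{k+1},\lcm(S_{k+1}))=n$ and $b_j\mid\lcm(S_{k+1})$; the clean extraction is $\gcd(b_{k+1},b_j)=\gcd(b_{k+1},\gcd(b_j,\lcm(S_{k+1})))$ — but $\gcd(b_j,\lcm(S_{k+1}))=b_j$, not obviously $n$. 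So one gets $\gcd(b_{k+1},b_j)\mid n$ and $n\mid b_{k+1}$, $n\mid b_j$; writing $b_j=n c_j$, $b_{k+1}=nc_{k+1}$, then $n\gcd(c_{k+1},c_j)\mid\gcd(b_{k+1},b_j)\mid n$ forces $\gcd(c_{k+1},c_j)=1$. That resolves it. Making sure at each stage that $b_{k+1}\notin\{b_1,\dots,b_k\}$ and $c_{k+1}\neq1$ (equivalently $b_{k+1}\neq n$) is exactly what the ``$\setminus S'$'' clause in \eqref{eq:Ainf-def} buys us, so the definition is tailored for this argument.
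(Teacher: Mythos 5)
Your ``only if'' direction is essentially the paper's argument: iterate the definition to produce finite sets $S_1\subset S_2\subset\dots$ and elements $b_k\in\cB$ with $n=\gcd(\lcm(S_k),b_k)$ and $b_j\in S_{k+1}$ for $j\leqslant k$, then observe that $\gcd(b_j,b_k)\mid\gcd(\lcm(S_k),b_k)=n$ together with $n\mid b_j,b_k$ forces $\gcd(b_j/n,b_k/n)=1$. Your way of ruling out $b_j/n=1$ (via $b_j\in S_{k+1}$ and $n\notin S_{k+1}$) is a clean alternative to the paper's argument, which instead invokes primitivity of $\cB$ to exclude $b_1=n$.

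The ``if'' direction, however, has a genuine gap. You set $S':=S\cup\{nc_i\}$ and then try to use $b:=nc_i$ as the witness for $n\in\cA_{S'}$, claiming $\gcd(nc_i,\lcm(S'))=n$. But $nc_i\in S'$ implies $nc_i\mid\lcm(S')$, so $\gcd(nc_i,\lcm(S'))=nc_i\neq n$; the witness must be an element of $\cB$ \emph{not} already folded into $\lcm(S')$. The fix (the paper's route) is to use \emph{two} elements of $\cC$: one, $c$, is put into $S'$ via $nc$, guaranteeing $n\mid\lcm(S')$; a second, $c'$, is chosen coprime to $\lcm(S')$ (possible since $\cC$ is infinite and pairwise coprime), and then $b:=nc'\in\cB$ satisfies $\gcd(nc',\lcm(S'))=n$. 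Your handling of $n\notin S'$ is also broken: the definition \eqref{eq:Ainf-def} requires $S\subseteq S'$, so if $n\in\cB$ one could take $S\ni n$ and then $n\in S'$ is forced; you cannot ``shrink $S'$ to exclude $n$.'' The missing observation is that $n\notin\cB$ follows automatically from primitivity: $n c\in\cB$ for some $c>1$, so $n\in\cB$ would violate primitivity. That single remark dissolves the entire bookkeeping you worry about.
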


Let $\Delta^*:\Z\to\prod_{b^*\in\cB^*}\Z/b^*\Z$ denote again the canonical diagonal embedding and let $H^*:=\overline{\Delta^*(\Z)}$.
Define a group homomorphism $\Gamma:\Delta(\Z)\to \Delta^*(\Z)$ by $\Gamma(\Delta(n))=\Delta^*(n)$. 
\begin{lemma}\label{lemma:Gamma-continuous}
$\Gamma$ is uniformly continuous on $\Delta(\Z)$ and extends uniquely to a
continuous surjective group homomorphism $\Gamma:H\to H^*$.
\end{lemma}

Let $W^*\subseteq H^*$ denote the corresponding window and $\varphi^*:H^*\to\{0,1\}^\Z$ the corresponding coding map. 

\begin{lemma}\label{lemma:B^*}(See also \cite[Thm.~B]{KKL2016}.)
\begin{compactenum}[a)]
\item The set $\cB^*$ is primitive.
\item $\cM_{\cB^*}=\cM_{\cB\cup\Ainf}=\bigcap_{S\subset\cB}\cM_{\cA_S}$. Equivalently, $\cF_{\cB^*}=\cF_{\cB\cup\Ainf}=\cE$.
\item There are no $n\in\N$ and no infinite pairwise
co-prime set $\cC\subseteq\N\setminus\{1\}$ such that $n\,\cC\subseteq\cB^*$.
\item The window $W^*\subseteq H^*$ constructed from $\cB^*$ is topologically regular, i.e. $W^*=\overline{\inn(W^*)}$.
\item $1_{\cF_{\cB^*}}=\varphi_{W'}(\Delta(0))=1_\cE$ is a Toeplitz sequence.
\end{compactenum}
\end{lemma}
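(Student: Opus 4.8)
The plan is to prove the six assertions of Lemma~\ref{lemma:B^*} more or less in the order listed, since each one feeds into the next, and then deduce (f) from (b) together with the definition of $W'$.

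\textbf{Parts (a) and (c).} For primitivity of $\cB^*=\cB_0\cup\prim\Ainf$ I would check the three types of divisibility separately. No element of $\cB_0$ divides another, since $\cB_0\subseteq\cB$ and $\cB$ is primitive. No element of $\prim\Ainf$ divides another by the very definition of ``primitive elements of $\Ainf$''. For the mixed cases: if $a\in\prim\Ainf$ and $b\in\cB_0$ with $a\mid b$, then $b\in\cM_\Ainf$, contradicting $b\in\cB_0=\cB\setminus\cM_\Ainf$; and if $b\in\cB_0$, $a\in\prim\Ainf$ with $b\mid a$, then since $a\in\Ainf$ Lemma~\ref{lemma:Ainf} gives an infinite pairwise coprime $\cC$ with $a\cC\subseteq\cB$, whence $b\cC\subseteq\cM_b\subseteq\cM_\cB$ — but $b\cC\subseteq a\cC\subseteq\cB$ and $\cB$ is primitive, so $b=a$, contradicting $b\in\cB_0$ and $a\notin\cB_0$ (elements of $\prim\Ainf$ lie in $\Ainf\subseteq\cM_\Ainf$, hence not in $\cB_0$ unless... — one should check $\prim\Ainf\cap\cB=\emptyset$, which follows because $a\in\Ainf$ means for every $S$ there is $S'\supseteq S$ with $a\in\cA_{S'}\setminus S'$, so in particular $a\notin\cB$). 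Part (c) then follows by contradiction: an infinite pairwise coprime $\cC$ with $n\cC\subseteq\cB^*$ would have an infinite sub-collection landing in one of $\cB_0$ or $\prim\Ainf$; the $\cB_0$ case forces $n\in\Ainf$ by Lemma~\ref{lemma:Ainf} (so $n\cC\subseteq\cM_\Ainf$, but $\cB_0$ is disjoint from $\cM_\Ainf$, a contradiction unless $n=1$, and $1\notin\cC$ was assumed — actually one wants $n\,c\in\cM_\Ainf$ for all $c\in\cC$); the $\prim\Ainf$ case is ruled out because $n\cC\subseteq\Ainf$ with $\cC$ infinite pairwise coprime would mean the elements $nc$ are all pairwise coprime up to the common factor $n$, and one element divides nothing — here I'd use that $\prim\Ainf$ has no infinite ``arithmetically nested'' family, which is essentially built into $\Ainf$'s definition via $\cA_{S'}\setminus S'$.

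\textbf{Part (b).} This is the arithmetic heart. The identity $\bigcap_{S\subset\cB}\cM_{\cA_S}=\Z\setminus\cE$ is \eqref{eq:E-complement}, and $\cF_{\cB\cup\Ainf}=\cE$ is \eqref{eq:E-identities}, so it remains to show $\cM_{\cB^*}=\cM_{\cB\cup\Ainf}$, equivalently $\cF_{\cB^*}=\cF_{\cB\cup\Ainf}$. The inclusion $\cM_{\cB^*}\subseteq\cM_{\cB\cup\Ainf}$ is clear since $\cB_0\subseteq\cB$ and $\prim\Ainf\subseteq\Ainf$. Conversely, if $m\in\cM_{\cB\cup\Ainf}$ then either $b\mid m$ for some $b\in\cB$, and then either $b\in\cB_0\subseteq\cB^*$ so $m\in\cM_{\cB^*}$, or $b\in\cM_\Ainf$, so some $a\in\Ainf$ divides $b\mid m$, and replacing $a$ by a primitive divisor of $a$ in $\prim\Ainf$ (which exists since $\Ainf$ is closed under taking the ``primitive part'' — one needs that every element of $\Ainf$ is a multiple of some element of $\prim\Ainf$; this should follow from $\Ainf$ being closed under divisors-that-still-lie-in-$\Ainf$, or from a minimality argument) gives $m\in\cM_{\prim\Ainf}\subseteq\cM_{\cB^*}$; or $a\mid m$ for some $a\in\Ainf$, handled the same way. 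The subtle point to nail down is that $\prim\Ainf$ is ``cofinal below'' $\Ainf$, i.e. every element of $\Ainf$ is divisible by a primitive one — over $\N$ this is automatic if there are no infinite descending divisor chains, which holds.

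\textbf{Parts (d), (e), (f).} For (d) and (e) I would invoke \cite[Thm.~B]{KKL2016}, cited in the lemma statement: that theorem says precisely that for a primitive set with no ``$n\cC$'' structure — which is our $\cB^*$ by (a) and (c) — the window is topologically regular and the resulting $\eta$-sequence $1_{\cF_{\cB^*}}$ is Toeplitz. So (d) and (e) are direct citations once (a) and (c) are in hand. Finally (f): by (b), $\cF_{\cB^*}=\cE$, so $1_{\cF_{\cB^*}}=1_\cE$; and $1_\cE=\varphi_{\inn(W)}(\Delta(0))=\varphi_{W'}(\Delta(0))$ was already recorded in the excerpt just before the Corollary ``$1_\cE\in X_\varphi$'', using \cite[Lemma~3.5]{KKL2016} to identify $\varphi_{W'}$ and $\varphi_{\inn(W)}$ on $\Delta(\Z)$. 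The main obstacle in the whole plan is Part (b), and within it the bookkeeping that $\prim\Ainf$ generates the same set of multiples as $\Ainf$ itself, i.e. verifying that every element of $\Ainf$ has a primitive divisor in $\Ainf$; everything else is either a short contradiction argument or a citation to \cite{KKL2016}.
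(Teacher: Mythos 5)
Your overall plan mirrors the paper's: prove (a)--(c) by hand, cite \cite[Thm.~B]{KKL2016} for (d)--(e) given (a) and (c), and deduce (f) from (b) together with the identifications of $\varphi_{W'}$ and $\varphi_{\inn(W)}$ on $\Delta(\Z)$. Parts (a), (b), (d), (e), (f) are essentially right (your (a) is more roundabout than the paper's, and the assertion ``$b\cC\subseteq a\cC$'' in the mixed case is garbled---what you actually want is $a\cC\subseteq\cM_b\cap\cB=\{b\}$, impossible since $\cC$ is infinite; and for (b) the ``cofinal below'' observation that every $a\in\Ainf$ has a primitive divisor in $\prim\Ainf$ is exactly the fact $\cM_{\prim\Ainf}=\cM_\Ainf$ that the paper uses, valid because divisibility is well-founded on $\N$).

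The genuine gap is in part (c), precisely at the step you wave away with ``here I'd use that $\prim\Ainf$ has no infinite `arithmetically nested' family, which is essentially built into $\Ainf$'s definition.'' That is not a proof, and nothing in the definition of $\Ainf$ rules out such a family directly. After your (correct) reduction to the case where $(n\,\cC)\cap\prim\Ainf$ is infinite, say $\{nc_1<nc_2<\dots\}$, what you must show is $n\in\Ainf$, contradicting the standing assumption $n\notin\Ainf$. The paper does this by a genuine diagonal construction: each $nc_k\in\prim\Ainf\subseteq\Ainf$ comes, via Lemma~\ref{lemma:Ainf}, with its own infinite pairwise coprime witness $\cC_k\subseteq\N\setminus\{1\}$ satisfying $nc_k\cC_k\subseteq\cB$. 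One then inductively chooses indices $k_1<k_2<\dots$ and elements $c_j'\in\cC_{k_j}$ so that $c_{k_{j+1}}$ and $c_{j+1}'$ are coprime to all previously chosen products $c_{k_1}c_1',\dots,c_{k_j}c_j'$ (possible because each $\cC_k$ is infinite and pairwise coprime). The resulting set $\cC'=\{c_{k_1}c_1',c_{k_2}c_2',\dots\}$ is an infinite pairwise coprime subset of $\N\setminus\{1\}$ with $n\cC'\subseteq\cB$, so Lemma~\ref{lemma:Ainf} gives $n\in\Ainf$, the desired contradiction. This construction is the substance of part (c); without it your argument does not close. (You should also handle the initial sub-case $n\in\Ainf$ separately, as the paper does: then $n\cC\subseteq\cB^*\cap\cM_{\Ainf}=\prim\Ainf$, and a primitive divisor $a\in\prim\Ainf$ of $n$ divides every $nc\in\prim\Ainf$, forcing $nc=a$ and hence $c=1$ by primitivity of $\prim\Ainf$, contradicting $\cC\subseteq\N\setminus\{1\}$.)
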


\begin{lemma}\label{lemma:Gamma}
\begin{compactenum}[a)]
\item $\Gamma(W')=W^*$.
\item 
$\Gamma(H\setminus W')=H^*\setminus W^*$
\item If $(\varphi_{W'}(h))_i=0$ for some $h\in H$ and $i\in\Z$, then 
\begin{equation*}
\begin{split}
&\exists b\in\cB\ \exists b^*\in\cB^*: b\in b^*\Z,\;(h_b+i)\in b^*\Z,
\text{ and [$b^*\in\prim\Ainf$ or $b^*= b$].}
\end{split}
\end{equation*}
\end{compactenum}
\end{lemma}

In view of \cite[Proposition~1.2]{KKL2016}, the Toeplitz sequence $1_{\cF_{\cB^*}}$ is regular if and only if $m_{H^*}(\partial W^*)=0$, where $m_{H^*}$ denotes the Haar measure on $H^*$. The following proposition characterizes the regularity of  
$1_{\cF_{\cB^*}}$ in terms of $m_H(\partial W')$.
It is proved in Subsection~\ref{subsec:toeplitz-proofs}.

\begin{proposition}\label{prop:toeplitz}
The Toeplitz sequence $1_{\cF_{\cB^*}}=\varphi_{W'}(\Delta(0))=1_\cE$ is regular if and only if $m_H(\partial W')=0$.
\end{proposition}

\section{Proofs}
\subsection{Proofs of Lemmas~\ref{lemma:Ainf} -~\ref{lemma:Gamma}}\label{subsec:lemmas-proof}
All sets and quantities derived from $\cB^*$ instead of $\cB$ also carry a $^*$ superscript.

\begin{proof}[Proof of Lemma~\ref{lemma:Ainf}]
Suppose first that $n\in\Ainf$. A repeated application of the definition~\eqref{eq:Ainf-def} of this set allows to produce a strictly increasing sequence $S_1\subset S_2\subset\dots$ of finite subsets of $\cB$ and a strictly increasing sequence $b_1<b_2<\dots$ of elements of $\cB$ such that
\begin{equation*}
n=\gcd\left(\lcm(S_k),b_k\right) \text{ and $b_k\in S_{k+1}$ for all $k\in\N$.}
\end{equation*}
Let $\cC:=\{b_1/n,b_2/n,\dots\}$. Then $n\,\cC=\{b_1,b_2,\dots\}\subseteq\cB$, and $\cC$ is an infinite pairwise co-prime subset of $\N$, because, for any $j<k$,
\begin{equation*}
\gcd(b_j/n,b_k/n)=n^{-1}\gcd(b_j,b_k)\mid n^{-1}\gcd(\lcm(S_k),b_k)=1\ .
\end{equation*}
Finally, $b_j/n\neq 1$ for all $j$, because otherwise $b_{j+1}=b_j/n\cdot b_{j+1}=b_{j+1}/n\cdot b_j$ is a multiple of $b_j$, which is impossible, because $\cB$ is primitive.

Conversely, suppose that $n\,\cC\subseteq\cB$, where $\cC$
is an infinite pairwise co-prime subset of $\N\setminus\{1\}$. 
Then $n\not\in\cB$, because $\cB$ is primitive.
Let $S\subset\cB$ and choose any $c\in\cC$.
Let $S':=S\cup\{nc\}$. As $S'\subset\cB$
is finite,
one can choose some $c'\in\cC$ which is co-prime to $\lcm(S')$.
Then $nc'\in\cB$, and as $n\mid\lcm(S')$, we conclude that
$n=\gcd\left(\lcm(S'),nc'\right)\in\cA_{S'}$. As $n\not\in\cB$, this shows that $n\in\cA_{S'}\setminus S'$. As $S\subset\cB$ could be chosen arbitrarily and as $S'\supseteq S$, this proves that $n\in\Ainf$.
\end{proof}

\begin{proof}[Proof of Lemma~\ref{lemma:Gamma-continuous}]
As $\Gamma$ is a group homomorphism, it suffices to prove continuity at $\Delta(0)\in\Delta(\Z)$ in order to guarantee uniform continuity:
 To see this, fix $b^*\in \cB^*$. If $b^*\in\cB_0$, then $(\Gamma(\Delta(n)))_{b^*}=(\Delta^*(n))_{b^*}=(\Delta(n))_{b^*}$ for all $n\in\Z$.
If $b^*\in \prim\Ainf$,
then there is $S\subset\cB$ such that $b^*\mid\lcm(S)$.
Let $n\in\Z$. In view of the CRT, the residue $n\mod\lcm(S)$ is uniquely determined by the $S$-tuple $(\Delta(n)_b)_{b\in S}$. Hence, as $b^*\mid\lcm(S)$, also 
$(\Gamma(\Delta(n)))_{b^*}=(\Delta^*(n))_{b^*}=n\mod b^*$ is uniquely determined by this $S$-tuple.

Uniform continuity allows to extend $\Gamma$ uniquely to a continuous group homomorphism on $H=\overline{\Delta(\Z)}$. In particular, $\Gamma(H)\supseteq \overline{\Gamma(\Delta(\Z))}=\overline{\Delta^*(\Z)}=H*$.
\end{proof}

\begin{proof}[Proof of Lemma~\ref{lemma:B^*}]
a)\;Let $b,b'\in\cB^*$, $b'=rb$ for some $r\in\N$. We show that $r=1$:
\begin{compactitem}[-]
\item If $b',b\in\prim\Ainf$, then $r=1$, because $\prim\Ainf$ is primitive.
\item If $b'\in\prim\Ainf$ but $b\in\cB_0\subseteq\cB$, then there is some $\tilde b\in\cB$ such that $rb=b'\mid\tilde b$.
As $\cB$ is primitive, this implies $r=1$.
\item If $b'\not\in\prim\Ainf$, then $b'\in\cB_0=\cB\setminus\cM_{\Ainf}$ so that $b\not\in\prim\Ainf$. But then $b\in\cB_0$, and again $r=1$ because $\cB$ is primitive.
\end{compactitem}
b)\;Recall that $\cB^*=(\cB\setminus\cM_{\Ainf})\cup\prim\Ainf$.
As $\cM_{\prim\Ainf}=\cM_{\Ainf}=\cM_{\cM_{\Ainf}}$, we conclude that
\begin{equation*}
\begin{split}
\cM_{\cB^*}
&=
\cM_{\cB\setminus\cM_{\Ainf}}\cup\cM_{\prim\Ainf}
=
\cM_{\cB\setminus\cM_{\Ainf}}\cup\cM_{\cM_{\Ainf}}
=
\cM_{\cB\cup\cM_{\Ainf}}\\
&=
\cM_{\cB}\cup\cM_{\cM_{\Ainf}}
=
\cM_{\cB}\cup\cM_{{\Ainf}}
=
\cM_{{\cB\cup\Ainf}}.
\end{split}
\end{equation*}
Taking complements this is equivalent to $\cF_{\cB^*}=\cF_{\cB\cup\Ainf}=\cE$, where we used \eqref{eq:E-identities} for the last identity. Finally note that $\Z\setminus\cE=\Z\setminus\bigcup_{S\subset\cB}\cF_{\cA_S}=\bigcap_{S\subset\cB}\cM_{\cA_S}$.\\
c)\;Suppose for a contradiction that there
are $n\in\N$ and an infinite pairwise
co-prime set $\cC\subseteq\N\setminus\{1\}$ such that $n\,\cC\subseteq\cB^*$. If $n\in\Ainf$, then 
$n\,\cC\subseteq\cB^*\cap\cM_{\Ainf}=\prim\Ainf$, so that $\cC=\{1\}$, a contradiction. Hence $n\not\in\Ainf$. 
Then Lemma~\ref{lemma:Ainf} implies that 
$(n\,\cC)\cap\cB_0\subseteq (n\,\cC)\cap\cB$ is finite, so that 
$(n\,\cC)\cap\prim\Ainf$ is infinite. This leads to the contradiction $n\in\Ainf$, as can be seen as follows: Let $\{nc_1<nc_2<\dots\}:=(n\,\cC)\cap\prim\Ainf$. For each $nc_k$ there is an infinite pairwise co-prime set $\cC_k\subseteq\N\setminus\{1\}$ such that $nc_k\cC_k\subseteq\cB$ (Lemma~\ref{lemma:Ainf}).
Inductively we choose elements $c_j'\in\cC_{k_j}$ for $j=1,2,\dots$ and suitable $k_1<k_2<\dots$ in the following way: $k_1=1$ and
$c_1'$ can be any element of $\cC_1$. Suppose that $k_1<\dots<k_j$ and $c_1',\dots,c_j'$ are chosen. 
Then choose $k_{j+1}>k_j$ and $c_{j+1}'\in \cC_{k_{j+1}}$ in such a way that $c_{k_{j+1}}$ and
$c_{j+1}'$ are co-prime to the finite set
$\{c_{k_1}c_{1}',\dots,c_{k_j}c_{j}'\}$. In this way we obtain an infinite pairwise co-prime
set $\cC':=\{c_{k_1}c_{1}',c_{k_2}c_{2}',\dots\}\subseteq\N\setminus\{1\}$. It satisfies $n\,\cC'\subseteq\cB$, because 
$n\cdot c_{k_j}c_j'\in nc_{k_j}\cC_{k_j}\subseteq\cB$. Hence $n\in\Ainf$ by Lemma~\ref{lemma:Ainf}.\\
d) \;As $\cB^*$ is primitive (see assertion a), this assertion as well as the Toeplitz property in e) are equivalent to assertion c), see \cite[Thm.~B]{KKL2016}.\\
e) \; The two identities follow from assertion b) and Corollary~\ref{coro:cE}.
\end{proof}

\begin{proof}[Proof of Lemma~\ref{lemma:Gamma}]
a) 
Let $k\in\Z$. Then 
\begin{equation*}
\begin{split}
\Delta^*(k)\in\inn(W^*)
&\Leftrightarrow
k\in\cE^*\hspace*{1.2cm}\text{by \cite[Lemma~3.1]{KKL2016},}\\
&\Leftrightarrow
k\in\cF_{\cB^*}\hspace*{1cm}\text{by Lemma~\ref{lemma:B^*}d and \cite[Prop.~3.2]{KKL2016},}\\
&\Leftrightarrow
k\in\cE\hspace*{1.35cm}\text{by Lemma~\ref{lemma:B^*}b.}
\end{split}
\end{equation*}
Observing the topological regularity of $W^*$ (see Lemma~\ref{lemma:B^*}d), we conclude
\begin{equation*}
W^*
=
\overline{\inn(W^*)\cap\Delta^*(\Z)}
=
\overline{\Delta^*(\cE)}
=
\overline{\Gamma(\Delta(\cE))}
=
\Gamma(\overline{\Delta(\cE)}),
\end{equation*}
where we used the continuity of $\Gamma$ and the compactness of $\overline{\Delta(\cE)}$ for the last identity. Hence
\begin{equation*}
W^*
=
\Gamma(\overline{\Delta(\cE)})
=
\Gamma(\overline{\inn(W)\cap\Delta(\Z)})
=
\Gamma(\overline{\inn(W)})
=
\Gamma(W').
\end{equation*}
b)
Let $h\in H\setminus W'$ and suppose for a contradiction that $\Gamma(h)\in W^*$. We will use the following facts:
\begin{compactenum}[(1)]
\item  Because of part a) of this lemma, there is $h'\in W'$ such that $\Gamma(h')=\Gamma(h)$. As $W'$ is topologically regular by construction and as $\Delta(\Z)$ is dense in $H$, there are $n'_1<n'_2<\dots$ such that $h'=\lim_{j\to\infty}\Delta(n'_j)$ and hence $\Gamma(h')=\lim_{j\to\infty}\Gamma(\Delta(n'_j))=\lim_{j\to\infty}\Delta^*(n'_j)$, where all $\Delta(n'_j)$ belong to $\inn(W')=\inn(W)$. Then \cite[Lem.~3.1c]{KKL2016} implies $n'_j\in\cE$, so that all $n'_j\in\cF_{\cB^*}$ by Lemma~\ref{lemma:B^*}b.
\item Fix some sequence $n_1<n_2<\dots$ such that $h=\lim_{j\to\infty}\Delta(n_j)$.
As $W'$ is closed, there exists some neighbourhood of $h$ which is disjoint from $W'$. Hence there are a finite subset $S$ of $\cB$ and some integer $j_1$ such that for all $j\geqslant j_1$
\begin{compactitem}[-]
\item $(\Delta(n_j))_b=h_b$ for all $b\in S$, and
\item if for some $k\in\Z$ holds $(\Delta(n_j))_b=(\Delta(k))_b$ for all $b\in S$, then $\Delta(k)\not\in W'$ and hence $k\in\cM_{\cB^*}$.
\end{compactitem}
(For the latter conclusion recall that
$k\in\cM_{\cB^*}=\Z\setminus\cE$ if and only if $\Delta(k)\not\in\inn(W)=\inn(W')$ by \cite[Lem.~3.1c]{KKL2016}.)
\item As $\lim_{j\to\infty}\Delta^*(n_j)=\lim_{j\to\infty}\Gamma(\Delta(n_j))=\Gamma(h)=\Gamma(h')=\lim_{j\to\infty}\Delta^*(n'_j)$, there is $j_2\geqslant j_1$ such that $(\Delta^*(n_j))_{b^*}=(\Delta^*(n'_j))_{b^*}$ for all $b$ from the finite set $\cB^*_S:=\{b^*\in\cB^*:b^*\mid\lcm(S)\}$ and all $j\geqslant j_2$.
\end{compactenum}
We first exploit fact (2): Let $j\geqslant j_1$. For each $k=n_j+t\cdot\lcm(S)$
there are $r_{j,t}\in\Z$ and $b_{j,t}^*\in\cB^*$ such that $k=r_{j,t}\cdot b^*_{j,t}$. Let $s_j:=\gcd(n_j,\lcm(S))$. By Dirichlet's theorem on primes in arithmetic progressions, given $j\geqslant j_1$, there are infinitely many primes $p_1<p_2<\dots$ and integers $t_i$ such that $s_j\,p_i=n_j+t_i\,\lcm(S)=r_{j,t_i}\,b^*_{j,t_i}$. For each $p_i$ at least one of the following two possibilities occurs:
\begin{compactitem}[-]
\item $p_i\mid r_{j,t_i}$. Then $b^*_{j,t_i}$ divides $s_j$ and hence $\lcm(S)$.
\item $p_i\mid b^*_{j,t_i}$. Then $r_{j,t_i}$ divides $s_j$ so that $q_i:=\frac{s_j}{r_{j,t_i}}\mid\lcm(S)$ and $q_i=\frac{b^*_{j,t_i}}{p_i}$.
\end{compactitem}
If, for any given $j\geqslant j_1$, the  second possibility occurs for infinitely many $p_i$, then there is at least one $q$ such that $q=q_i$ for infinitely many $p_i$. Hence $q\,p_i=b^*_{j,t_i}\in\cB^*$ for all these $p_i$, which implies that $W^*$ is not topologically regular \cite[Thm.~B]{KKL2016} and thus contradicts
our Lemma~\ref{lemma:B^*}d. Otherwise, for all $j\geqslant j_1$, the first possibility occurs  for all but at most finitely many $p_i$. In particular, for each $j\geqslant j_1$, there exists $i_j$ such that $n_j=r_{j,t_{i_j}}\,b^*_{j,t_{i_j}}-t_{i_j}\,\lcm(S)$, and $b^*_{j,t_{i_j}}\mid\lcm(S)$. Abbreviate $b^*_j:=b^*_{j,t_{i_j}}$. It follows that $n_j\in b^*_j\Z$ and $b_j^*\in\cB^*_S$, i.e.~$(\Delta^*(n_j))_{b_j^*}=0$ and $b_j^*\in\cB^*_S$. Now fact (3) implies that also $(\Delta^*(n'_j))_{b_j^*}=0$, i.e.~$n'_j\in b_j^*\Z\subseteq\cM_{\cB^*}$, which contradicts fact (1).
\\
c) Let $h\in H$. Then $h\in H\setminus W'$ if and only if $\Gamma(h)\in H^*\setminus W^*$ because of assertions a) and b) of this lemma.
Hence, for any $i\in\Z$,
%
\begin{equation*}
\begin{split}
(\varphi_{W'}(h))_i=0
\Leftrightarrow&\;
h+\Delta(i)\in H\setminus W'\\
\Leftrightarrow&\;
\Gamma(h+\Delta(i))\in  H^*\setminus W^*\\
\Leftrightarrow&\;
\exists b^*\in\cB^*: (\Gamma(h+\Delta(i)))_{b^*}=0\\
\Leftrightarrow&\;
\exists b^*\in\cB^*: (\Gamma(h))_{b^*}+i\in b^*\Z.
\end{split}
\end{equation*}
If $b^*\in\prim\Ainf$, there is some $b\in\cB$ such that $b^*\mid b$. If $b^*\in\cB_0$, choose $b=b^*$. In any case we have $b\in b^*\Z$.
Pick a sequence $(m_j)_j$ in $\Z$ such that $h=\lim_{j\to\infty}\Delta(m_j)$.
Then, for all sufficiently large $j\in\N$,
\begin{equation*}
(\Gamma(h))_{b^*}
=
(\Gamma(\Delta(m_j)))_{b^*}
=
(\Delta^*(m_j))_{b^*}
=
\left(m_j\mod{b^*}\right)
=
\left(m_{j}\mod{b}\right)\mod b^*
=
h_b\mod b^*,
\end{equation*}
so that
\begin{equation*}
(\varphi_{W'}(h))_i=0
\Leftrightarrow\;
(h_b+i)\in b^*\Z.
\end{equation*}
\end{proof}

\subsection{Proofs of Theorem~\ref{theo:hereditary-ext} and Corollary~\ref{coro:all-X-ext}}
\label{subsec:main-proofs}
The proof of Theorem~\ref{theo:hereditary-ext} is inspired by the proof of Theorem~D in \cite{BKKL2015} and combines ideas from that proof with observations from \cite{KKL2016}. The two main differences compared to the proofs in \cite{BKKL2015} are:
\begin{itemize}[-]
\item The proximality assumption enters the proof from \cite{BKKL2015} immediately after eqn.~(58) of that paper: for each finite $S\subset\cB$ there exists $b\in\cB$ such that $\gcd(\lcm(S),b)=1$, i.e. $1\in\cA_S$ and hence $\Z=\cM_{\cA_S}$. Without that assumption we can only use our general knowledge of the set $\cA_S$,
namely that $\Z\setminus\cE\subseteq\cM_{\cA_S}$.
\item The light tails assumption is used in the proof of \cite[Lemma~5.21]{BKKL2015}, from which Proposition~5.11 of that reference is deduced. 
This proposition 
basically asserts that for certain cylinder sets in $H$ there exist $k'\in\Z$ such that $\sigma^{k'}\eta=\varphi(\Delta(k'))$ belongs to that cylinder set. This is proved by showing that the set of integers $k'$ with this property has positive density.
Here we prove the existence by a topological argument using compactness of $H$ and the Heine-Borel theorem. The price to pay is that the existence can no longer 
be guaranteed in the set $\Delta(\Z)$ but only in its closure $H$.
\end{itemize}
The following notation will be used repeatedly:
\begin{itemize}
\item By $S,S'\subset\cB$ we always mean \emph{finite} subsets.
\item The topology on $H$ is generated by the (open and closed) cylinder sets
\begin{equation*}
U_S(h):=\{h'\in H: \forall b\in S: h_b=h'_b\},\text{ defined for finite }S\subset\cB\text{ and }h\in H\ .
\end{equation*}
\item For $n\geqslant1$, let $\cB^{(n)} := \{b\in\cB : p\leqslant n\text{ for any }p\in\Spec(b)\}$,
where $\Spec(b)$ stands for the set of all prime divisors of $b$. 
\item For $n\in\N$ and $b\in\cB$ let $H_b^N:=\bigcap_{i=-N}^N\{h\in H: h_b+i\neq 0\mod b\}$. 
All sets $H_b^N$ are open and closed by definition. If  $b\not\in\cB^{(2N+1)}$, then $b$ has at least one prime divisor greater than $2N+1$ so that 
$\Z\not \subseteq\bigcup_{i=-N}^N(b\Z+i)$, in particular
$H_b^N\neq\emptyset$.
For $\cC\subseteq\cB$ denote $H_\cC^N:=\bigcap_{b\in\cC}H_b^N$.
\end{itemize}
We start with a modification of some lemmas from \cite{BKKL2015}.

\begin{lemma}\label{lemma:new-1}
Let $N\in\N$, $n\geqslant 2N+1$, $\cB^{(n)}\subseteq \cA\subset\cB$, $\cA$ finite, $p>n$ a prime number not dividing $\lcm(\cA)$,  and $S\subset\cB\setminus\cA$ be finite. Denote $S_0:=\{b\in S: p\not\mid b\}$. If 
$U_\cA(\Delta(k))\cap H_{S_0}^N\neq\emptyset$ for some $k\in\Z$, then also 
$U_\cA(\Delta(k))\cap H_S^N\neq\emptyset$.
\end{lemma}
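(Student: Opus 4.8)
The plan is to prove Lemma~\ref{lemma:new-1} by exhibiting an explicit element of $U_\cA(\Delta(k))\cap H_S^N$, built by perturbing a given element of $U_\cA(\Delta(k))\cap H_{S_0}^N$ along the coordinate $p$ using the Chinese Remainder Theorem. First I would fix $h\in U_\cA(\Delta(k))\cap H_{S_0}^N$, and note that since $H=\overline{\Delta(\Z)}$ and $U_\cA(\Delta(k))\cap H_{S_0}^N$ is open and closed (being a finite intersection of open-closed cylinder-type sets), we may as well take $h=\Delta(m)$ for some $m\in\Z$ with $m\equiv k$ on all $b\in\cA$ and $m+i\not\equiv 0 \mod b$ for all $b\in S_0$, $|i|\le N$. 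Since $p\nmid\lcm(\cA)$ and $p>n\ge 2N+1$, I can look for an integer $m'$ with $m'\equiv m \mod \lcm(\cA\cup S_0)$ (this preserves membership in $U_\cA(\Delta(k))$ and in $H_{S_0}^N$) while also controlling $m' \mod p$.

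The key step: for each $b\in S\setminus S_0$ we have $p\mid b$. Write $b=p^{a_b}\,b'$ with $p\nmid b'$. I want $m'$ such that for every $b\in S\setminus S_0$ and every $i$ with $|i|\le N$, $m'+i\not\equiv 0 \mod b$; it suffices to ensure $m'+i\not\equiv 0 \mod p$ for all such $i$, i.e. $m' \not\equiv -i \mod p$ for $i=-N,\dots,N$. That excludes at most $2N+1$ residues mod $p$, and since $p>2N+1$ there is at least one admissible residue $r_0 \mod p$. Now I need $m'\equiv m \mod \lcm(\cA\cup S_0)$ and $m'\equiv r_0 \mod p$ simultaneously; because $p\nmid\lcm(\cA)$ and — here is the point needing care — we also need $p$ to be coprime to $\lcm(S_0)$, the modulus $\lcm(\cA\cup S_0)$ is coprime to $p$ precisely when $p\nmid\lcm(S_0)$, which holds by definition of $S_0=\{b\in S: p\nmid b\}$. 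So CRT yields such an $m'$, and then $\Delta(m')\in U_\cA(\Delta(k))\cap H_{S_0}^N\cap H_{S\setminus S_0}^N = U_\cA(\Delta(k))\cap H_S^N$, which is therefore nonempty.

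The main obstacle I anticipate is purely bookkeeping: making sure the perturbation along $p$ does not disturb the conditions already secured on $\cA$ and $S_0$. This is exactly why the hypothesis splits $S$ into $S_0$ (the part coprime to $p$) and $S\setminus S_0$ (the part divisible by $p$): freezing $m' \mod \lcm(\cA\cup S_0)$ handles $\cA$ and $S_0$ at once, while the residue mod $p$ is still free because $p$ is coprime to that modulus. One should also double-check that the hypothesis $\cB^{(n)}\subseteq\cA$ together with $n\ge 2N+1$ is what guarantees $H_{S\setminus S_0}^N\ne\emptyset$ in the first place (each $b\in S\setminus S_0\subseteq\cB\setminus\cA$ lies outside $\cB^{(n)}\supseteq\cB^{(2N+1)}$, hence has a prime divisor $>2N+1$), so that the construction is not vacuous — though in fact the CRT argument produces the witness directly, so this remark is just a sanity check rather than a logical step. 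Finally I would remark that this lemma is the inductive engine: it lets one enlarge the "good" set $S$ one prime at a time, which is presumably how it will be combined with compactness of $H$ in the proof of Theorem~\ref{theo:hereditary-ext}.
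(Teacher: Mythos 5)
Your proof is correct and uses essentially the same CRT argument as the paper's: both hinge on $p>2N+1$ to secure an admissible residue mod $p$ and on $p$ being coprime to $\lcm(\cA\cup S_0)$ so that CRT applies. The only cosmetic difference is that you give a direct construction (first using density of $\Delta(\Z)$ in the open set $U_\cA(\Delta(k))\cap H_{S_0}^N$ to reduce to an integer point), whereas the paper argues by contradiction after collapsing the conditions on $S\setminus S_0$ to a single fixed $b'\in S\setminus S_0$ via the observation $h_b\equiv h_{b'}\pmod p$.
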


\begin{proof}
If $S_0=S$, there is nothing to prove. So suppose that $S\setminus S_0\neq \emptyset$.
By definition,
\begin{equation*}
U_\cA(\Delta(k))\cap H_{S_0}^N
\subseteq
(U_\cA(\Delta(k))\cap H_S^N)\cup\bigcup_{b\in S\setminus S_0}\bigcup_{i=-N}^N\{h\in U_\cA(\Delta(k)): h_b+i\in p\Z\}.
\end{equation*}
Fix any $b'\in S\setminus S_0$. As all $b\in S\setminus S_0$ are multiples of $p$, we have 
$h_b-h_{b'}\in p\Z$ for such $b$, so that
$h_b+i\in p\Z$ if and only if $h_{b'}+i\in p\Z$ for all $h\in H$ and $i=-N,\dots,N$. Hence
\begin{equation*}
U_\cA(\Delta(k))\cap H_{S_0}^N
\subseteq
(U_\cA(\Delta(k))\cap H_S^N)\cup\bigcup_{i=-N}^N\{h\in U_\cA(\Delta(k)): h_{b'}+i\in p\Z\}.
\end{equation*}

Suppose for a contradiction that $U_\cA(\Delta(k))\cap H_S^N=\emptyset$. Then
\begin{equation}\label{eq:assumption-1}
U_\cA(\Delta(k))\cap H_{S_0}^N
\subseteq
\bigcup_{i=-N}^N\{h\in U_\cA(\Delta(k)): h_{b'}+i\in p\Z\}.
\end{equation}
As $p>n\geqslant 2N+1$, there is $j\in\Z$ such that $j+i\not\in p\Z$ for all $i=-N,\dots,N$.
Let $h\in U_\cA(\Delta(k))\cap H_{S_0}^N$. 
As $\gcd(p,b)=1$ for all $b\in\cA\cup S_0$, the CRT guarantees the existence of an integer $m$ such that 
\begin{compactitem}[-]
\item $m=h_b=k\mod b$ for all $b\in \cA$,
\item $m=h_b\mod b$ for all $b\in S_0$, and
\item $m=j\mod p$.
\end{compactitem}
Hence $\Delta(m)\in U_\cA(\Delta(k))\cap H_{S_0}^N$, but since $p|b'$, we have
$\Delta(m)|_{b'}+i\mod p=j+i\mod p\neq0$ for all $i=-N,\dots,N$ in contradiction to \eqref{eq:assumption-1}.

\end{proof}

\begin{lemma}\label{lemma:new-2}
Let $N\in\N$, $n\geqslant 2N+1$, and $\cB^{(n)}\subseteq \cA\subset\cB$, $\cA$ finite.
Denote 
\begin{equation}\label{eq:cC-def}
\cC=\cC(\cA,n):=\{b\in\cB\setminus\cA: \text{$b$ has at least one prime divisor $p>n$  co-prime to $\lcm(\cA)$}\}.
\end{equation}
Then, for each $h\in H$, there is $h'\in U_\cA(h)$ such that
$h'_b+i\neq 0\mod b$ for all $b\in\cC$ and $i\in\{-N,\dots,N\}$.
\end{lemma}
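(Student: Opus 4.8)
The plan is to restate the conclusion in the notation just introduced and then pass from $\cC$ to its finite subsets by a compactness argument, the finite case being handled by induction on Lemma~\ref{lemma:new-1}. Concretely, the assertion to be proved is that $U_\cA(h)\cap H_\cC^N\neq\emptyset$ for every $h\in H$. Since $H_\cC^N=\bigcap_{b\in\cC}H_b^N$, the set $U_\cA(h)\cap H_b^N$ is a closed subset of the compact space $H$ for each $b\in\cC$, and the finite intersection property reduces the task to proving
\[
U_\cA(h)\cap H_S^N\neq\emptyset\qquad\text{for every finite }S\subseteq\cC .
\]
(If $\cC=\emptyset$ the statement is trivial, $H_\cC^N=H$.)

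So I would fix a finite $S\subseteq\cC$. Because $\cA$ is finite and $\Delta(\Z)$ is dense in the clopen set $U_\cA(h)$, there is $k\in\Z$ with $\Delta(k)\in U_\cA(h)$, hence $U_\cA(h)=U_\cA(\Delta(k))$, which is the form in which Lemma~\ref{lemma:new-1} is stated. I then argue by induction on $|S|$. If $S=\emptyset$, then $H_S^N=H$ and $\Delta(k)$ itself does the job. If $S\neq\emptyset$, pick any $b_0\in S$; as $b_0\in\cC$ it has a prime divisor $p>n$ coprime to $\lcm(\cA)$. Put $S_0:=\{b\in S:p\nmid b\}$; then $b_0\in S\setminus S_0$, so $S\setminus S_0\neq\emptyset$ and $|S_0|<|S|$. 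Crucially, $S_0\subseteq\cC$ still holds: every $b\in S_0$ lies in $\cC$, so it has some prime divisor $q>n$ coprime to $\lcm(\cA)$, and $q\neq p$ since $p\nmid b$, so $b$ still witnesses membership in $\cC$. By the induction hypothesis $U_\cA(\Delta(k))\cap H_{S_0}^N\neq\emptyset$, and since $S\subseteq\cC\subseteq\cB\setminus\cA$ all the hypotheses of Lemma~\ref{lemma:new-1} are met (with $\cA$, $n$, $p$, $S$, $S_0$ as here), it yields $U_\cA(\Delta(k))\cap H_S^N\neq\emptyset$, closing the induction.

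Combining the induction with the finite intersection property produces $h'\in\bigcap_{b\in\cC}\bigl(U_\cA(h)\cap H_b^N\bigr)=U_\cA(h)\cap H_\cC^N$, i.e. $h'\in U_\cA(h)$ with $h'_b+i\neq0\mod b$ for all $b\in\cC$ and all $i\in\{-N,\dots,N\}$, which is exactly the claim. The substantive combinatorics lives in Lemma~\ref{lemma:new-1}; the only genuinely new ingredient here is that the positive-density argument of \cite{BKKL2015} gets replaced by the compactness of $H$ — this is precisely why the output is a point $h'\in H$ rather than of $\Delta(\Z)$. The one step that needs care, and which I expect to be the main (though minor) obstacle, is the verification that after discarding from $S$ the elements divisible by $p$ the remaining set $S_0$ still lies entirely in $\cC$, so that the induction hypothesis can be applied.
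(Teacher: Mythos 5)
Your proof is correct and follows essentially the same route as the paper: reduce to finite $S\subseteq\cC$ by compactness of $H$ and the finite intersection property, then induct on $|S|$ using Lemma~\ref{lemma:new-1}. The one step you flag as needing care, namely $S_0\subseteq\cC$, is actually immediate (since $S_0\subseteq S\subseteq\cC$), so the short argument you give for it is harmless but superfluous.
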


\begin{proof}
We must show that $U_\cA(h)\cap H_{\cC}^N\neq\emptyset$. As all $H_b^N$ are compact, 
it suffices to show that $U_\cA(h)\cap H_S^N\neq\emptyset$ 
for each finite set $S\subset\cC$. 
We proceed by induction on the cardinality of the set $S$.
\begin{itemize}[-]
\item  If $S=\emptyset$, there is nothing to prove, because $U_\cA(h)\neq\emptyset$.
\item  For the inductive step, let 
$\emptyset\neq S\subseteq\cC$, $S$ finite, and note that there exists at least one prime number $p>n$ which is co-prime to $\lcm(\cA)$ and divides at least one element $b\in S$. To this $p$ and its associated set $S_0\subseteq S\setminus\{b\}$ we apply Lemma~\ref{lemma:new-1}.
\end{itemize}
\end{proof}

\begin{lemma}(See also \cite[Prop.~5.10]{BKKL2015}.)\label{lemma:ext-1}\\
Let $h\in H$, $N\in\N$ and $I_0\subseteq\{-N\dots,N\}$ be such that 
$(\varphi_{W'}(h))_{i}=0<1=(\varphi(h))_i$ for all $i\in I_0$. Define $\omega\in\{0,1\}^{\{-N,\dots,N\}}$ by
\begin{equation*}
\omega_i
:=
\begin{cases}
(\varphi(h))_i&\text{ if }i\not\in I_0\\
0&\text{ if }i\in I_0.
\end{cases}
\end{equation*}
Then, for each $S\subset\cB$, there is $h'\in U_S(h)$ such that $(\varphi(h'))_i=\omega_i$ for all $i=-N,\dots,N$.
\end{lemma}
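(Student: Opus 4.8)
The plan is to prove the lemma by induction on $|I_0|$, using Lemma~\ref{lemma:new-2} to control the ``generic'' coordinates and, in the inductive step, re-killing one position of $I_0$ at a time.

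First some bookkeeping. Put $n:=2N+1$. Since $\cB$ is primitive, for any finite set of primes $\Pi$ the elements of $\cB$ all of whose prime divisors lie in $\Pi$ form a divisibility-antichain in $\N^{|\Pi|}$, hence a finite set by Dickson's lemma; in particular $\cB^{(n)}$ is finite. Call a finite set $\cA$ with $\cB^{(n)}\subseteq\cA\subset\cB$ \emph{$n$-saturated} if $\cB\setminus\cA$ equals the set $\cC(\cA,n)$ of Lemma~\ref{lemma:new-2}, i.e.\ if every $b\in\cB\setminus\cA$ has a prime divisor $>n$ coprime to $\lcm(\cA)$. Every finite $\cA_0\supseteq\cB^{(n)}$ is contained in an $n$-saturated finite set: adjoin $\cB\setminus\cC(\cA_0,n)$, which is finite (each of its elements has all prime divisors in $\{p\leqslant n\}\cup\Spec(\lcm(\cA_0))$) and introduces no new prime divisor of the lcm, and check that the enlarged set is $n$-saturated. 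For the \emph{base case} $I_0=\emptyset$ we have $\omega=\varphi(h)|_{\{-N,\dots,N\}}$; pick an $n$-saturated $\cA\supseteq S$ containing, for each $i$ with $(\varphi(h))_i=0$, a ``killer'' $b_i\in\cB$ with $h_{b_i}+i\equiv0\mod b_i$; Lemma~\ref{lemma:new-2} gives $h'\in U_\cA(h)$ with $h'_b+i\neq0\mod b$ for all $b\in\cB\setminus\cA$ and all $i$, and a direct check (the killers handle the positions with $(\varphi(h))_i=0$; for the others use the coordinates in $\cA$, where $h'_b=h_b$, and in $\cB\setminus\cA$) shows $\varphi(h')|_{\{-N,\dots,N\}}=\omega$.

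For the inductive step let $|I_0|=r\geqslant1$, fix $i_0\in I_0$ and put $I_0':=I_0\setminus\{i_0\}$. It suffices to produce $h''\in H$ and a finite $S''\subset\cB$ with (i) $U_{S''}(h'')\subseteq U_S(h)$, (ii) $(\varphi(h''))_{i_0}=0$ and $(\varphi(h''))_i=(\varphi(h))_i$ for all $i\in\{-N,\dots,N\}\setminus\{i_0\}$, and (iii) $(\varphi_{W'}(h''))_i=0$ for all $i\in I_0'$. Indeed, by (ii) and (iii) the data $(h'',N,I_0',S'')$ then satisfies the hypotheses of the lemma, the inductive hypothesis yields $h'\in U_{S''}(h'')$ realizing on $\{-N,\dots,N\}$ the pattern $\omega''$ associated to $(h'',I_0')$, and (ii) gives $\omega''=\omega$ while (i) gives $h'\in U_S(h)$. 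To build $h''$: using upper semicontinuity of $\varphi_{W'}$ and $\varphi$, choose a finite $\cA_1$ with $U_{\cA_1}(h)\subseteq\{g:(\varphi_{W'}(g))_i=0\ \forall i\in I_0'\}\cap\{g:(\varphi(g))_i=0\text{ whenever }(\varphi(h))_i=0\}$, and enlarge $S\cup\cA_1\cup\cB^{(n)}$ to an $n$-saturated finite set $\cA$. Since $(\varphi(h))_{i_0}=1$ and $(\varphi_{W'}(h))_{i_0}=0$, the point $h+\Delta(i_0)$ lies in $W\setminus\overline{\inn(W)}\subseteq H\setminus\inn(W)=\overline{H\setminus W}$, i.e.\ it is a boundary point of the open set $H\setminus W$; hence the clopen neighbourhood $U_\cA(h+\Delta(i_0))$ meets $H\setminus W$. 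Pick $g$ in this intersection and $b^*\in\cB$ with $g_{b^*}\equiv0\mod b^*$; since $g_b=h_b+i_0\neq0\mod b$ for $b\in\cA$, necessarily $b^*\notin\cA$. Then $g':=g-\Delta(i_0)$ agrees with $h$ on $\cA$ and satisfies $g'_{b^*}\equiv-i_0\mod b^*$. Apply Lemma~\ref{lemma:new-2} to $\cA^*:=\cA\cup\{b^*\}$ and the point $g'$ to obtain $h''\in U_{\cA^*}(g')$ with $h''_b+i\neq0\mod b$ for all $b\in\cC(\cA^*,n)$ and all $i\in\{-N,\dots,N\}$, and set $S'':=\cA^*$. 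Now $h''$ agrees with $h$ on $\cA\supseteq S\cup\cA_1$ (giving (i), (iii), and that positions with $(\varphi(h))_i=0$ stay killed) and with $g'$ at $b^*$ (giving $(\varphi(h''))_{i_0}=0$); and for $i\neq i_0$ with $(\varphi(h))_i=1$ one checks $h''_b+i\neq0\mod b$ for every $b\in\cB$ by cases: $b\in\cA$ ($h''_b=h_b$); $b=b^*$ ($h''_{b^*}+i\equiv i-i_0$, nonzero mod $b^*$ since $b^*$ has a prime factor $>2N\geqslant|i-i_0|$ and $i\neq i_0$); $b\in\cC(\cA^*,n)$ (choice of $h''$); and $b\in\cB\setminus(\cA^*\cup\cC(\cA^*,n))$, where $n$-saturation of $\cA$ forces $b\in\cC(\cA,n)$, so $b$ has a prime $q>n$ coprime to $\lcm(\cA)$, whence $b\notin\cC(\cA^*,n)$ forces $q\mid b^*$, so a power of $q$ exceeding $2N$ divides $\gcd(b,b^*)$ and, since $h''\in H$, $h''_b\equiv h''_{b^*}\equiv-i_0$ modulo that power of $q$, giving $h''_b+i\neq0\mod b$. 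This yields (ii) and closes the induction.

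The main obstacle is twofold. First, one must invoke Dickson's lemma to see that $\cB^{(n)}$ and the ``uncovered'' sets $\cB\setminus\cC(\cA,n)$ are finite; this is what makes $n$-saturation possible and lets Lemma~\ref{lemma:new-2} apply at all. Second, and more subtly, one must observe (last case above) that switching on a single killer $b^*$ for position $i_0$ automatically prevents every element of $\cB\setminus(\cA^*\cup\cC(\cA^*,n))$ from killing any \emph{other} window position, because each such element shares a prime $>2N$ with $b^*$; this is precisely what allows re-applying Lemma~\ref{lemma:new-2} with the no-longer-saturated set $\cA^*$ without imposing extra congruence conditions that could clash with $g'_{b^*}\equiv-i_0\mod b^*$ — trying instead to re-saturate $\cA^*$ and fix all killers simultaneously runs into CRT-incompatibilities of the form $\gcd(b^*_l,b^*_{l'})\mid i_l-i_{l'}$, which the inductive, one-at-a-time approach avoids.
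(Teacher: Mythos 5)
Your proof is correct and takes a genuinely different route from the paper's. The paper kills all the positions of $I_0$ \emph{simultaneously}: it invokes Lemma~\ref{lemma:Gamma}c to associate to each $i\in I_0$ an element $\tilde b_i^*\in\prim\Ainf$, blows each $\tilde b_i^*$ up to $b_i=d_i\tilde b_i^*$ with pairwise coprime $d_i$, and then uses one CRT solve (modulo $\beta_1$ and the $b_i$) to produce a single translate $h^\diamondsuit=h+\Delta(\ell\beta_1)$ at which all new killers fire at once; the needed CRT compatibility $\gcd(b_i,b_j)\mid(h_{b_j}+j)-(h_{b_i}+i)$ is exactly what the $\tilde b_i^*\mid h_{b_i}+i$ divisibility buys. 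You instead avoid the $\cB^*/\Ainf$ machinery entirely: you observe that $(\varphi_{W'}(h))_{i_0}=0<1=(\varphi(h))_{i_0}$ says $h+\Delta(i_0)\in W\setminus\overline{\inn W}\subseteq\overline{H\setminus W}$, so a single killer $b^*$ can be extracted from the cylinder $U_\cA(h+\Delta(i_0))$, and you then induct on $|I_0|$, adding one killer per step. The key new observation you supply, which lets the induction close, is that after adjoining $b^*$ to an $n$-saturated $\cA$, every $b\in\cB\setminus(\cA^*\cup\cC(\cA^*,n))$ shares a prime $q>n$ with $b^*$, so the congruence $h''_{b^*}\equiv-i_0\pmod{b^*}$ automatically propagates to $h''_b\equiv-i_0\pmod q$ and forbids $b$ from killing any other window position; this is precisely what makes a one-at-a-time argument work without needing the $\tilde b_i^*$-divisibility trick. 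Both approaches are sound; yours is more self-contained for this particular lemma (you only use Lemma~\ref{lemma:new-2} and basic topology of $W$), while the paper's exploits the $\cB^*$ apparatus it needs anyway for Lemma~\ref{lemma:B^*} and Proposition~\ref{prop:toeplitz}, and dispenses with the induction. Two small presentational points: in the last case of your inductive step you write ``a power of $q$ exceeding $2N$ divides $\gcd(b,b^*)$'', but you only need (and only have in general) that $q$ itself divides $\gcd(b,b^*)$ and $q>2N$, which already suffices; and your appeal to Dickson's lemma for the finiteness of $\cB^{(n)}$ and of $\cB\setminus\cC(\cA_0,n)$ is the same fact the paper cites as \cite[Lemma~5.14]{BKKL2015}, so you could cite that instead.
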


\begin{proof}
\textbf{Step 1:}\quad This step applies to any $h\in H$ and $n\geqslant2N+1$.
For any set $\tilde\cB$ with $\cB^{(n)}\subseteq\tilde\cB\subseteq\cB$ denote
\begin{equation*}
\begin{split}
I_N^{\tilde\cB}(h)
:=&
\{i\in\{-N,\dots,N\}: \exists b\in{\tilde\cB}\text{ s.t. }h_b+i\in b\Z\}.
\end{split}
\end{equation*}
Observe that for ${\tilde\cB}=\cB$, 
\begin{equation*}
\begin{split}
I_N^\cB(h)
=
\{i\in\{-N,\dots,N\}:h+\Delta(i)\not\in W\}
=
\{i\in\{-N,\dots,N\}: (\varphi(h))_i=0\}.
\end{split}
\end{equation*}
In particular, $I_0\cap I_N^{\cB}(h)=\emptyset$.

For each $i\in I_N^{\tilde\cB}(h)$ we fix some $b_i\in{\tilde\cB}$ with  $(h_{b_i}+i)\in b_i\Z$.
Let
\begin{equation*}
{\tilde\cB}_1:=\cB^{(n)}\cup\{b_i: i\in I_N^{\tilde\cB}(h)\}\subseteq{\tilde\cB}.
\end{equation*}
By \cite[Lemma~5.14]{BKKL2015}, the set ${\tilde\cB}_1$ is finite. 

Let $\tilde\beta_1:=\lcm({\tilde\cB}_1)$. Then, for any $i\in I_N^{\tilde\cB}(h)$ and $\ell\in\Z$,
$(h_{b_i}+i+\ell\tilde\beta_1)\in b_i\Z$, i.e.
$I_N^{\tilde\cB}(h)\subseteq I_N^{{\tilde\cB}_1}(h+\Delta(\ell\tilde\beta_1))$.
Assume for a contradiction that there exists $j\in I_N^{{\tilde\cB}_1}(h+\Delta(\ell\tilde\beta_1))\setminus I_N^{\tilde\cB}(h)$. Then there is $b\in{\tilde\cB}_1$ such that $(h_{b}+\ell\tilde\beta_1+j)\in b\Z$, but $(h_{b}+j)\not\in b\Z$, a contradiction to $b\mid\tilde\beta_1$. Therefore,
\begin{equation}\label{eq:I_N-identity}
I_N^{\tilde\cB}(h)= I_N^{{\tilde\cB}_1}(h+\Delta(\ell\tilde\beta_1))\quad\text{for all }\ell\in\Z.
\end{equation}
\textbf{Step 2:}\quad
From now on we choose $n\geqslant2N+1$ large enough that $S\subseteq\cB^{(n)}$, and we consider the particular element $h\in H$ from the assumption of the lemma.

For each $i\in I_0$ we have $(\varphi_{W'}(h))_i=0$. 
{
Hence
one can choose  $\tilde b_i\in\cB$ and $\tilde b_i^*\in\cB^*$ 
as in Lemma~\ref{lemma:Gamma}c
such that $\tilde b_i\in\tilde b_i^*\Z$ and
$h_{\tilde b_i}+i\in\tilde b_i^*\Z$. But $h_{\tilde b_i}+i\not\in\tilde b_i\Z$, because $(\varphi(h))_i=1$.
In particular, $\tilde b_i^*\neq \tilde b_i$ so that $\tilde b_i^*\in\prim\Ainf$ by Lemma~\ref{lemma:Gamma}c. 
Hence, in view of Lemma~\ref{lemma:Ainf}, there is an infinite pairwise co-prime set $\cD_i\subseteq\N\setminus\{1\}$ such that $\tilde b_i^*\cD_i\subseteq\cB$. 
}

We are going to apply Step 1 to $\tilde\cB=\cB$. This yields the set $\cB_1=\cB^{(n)}\cup\{b_i:i\in I_N^\cB(h)\}$ and the number $\beta_1:=\lcm(\cB_1)$.
As $I_0$ is a finite set, we can choose $d_i\in\cD_i$ $(i\in I_0)$ pairwise co-prime and co-prime to $\beta_1$ and all $\tilde b_j$, $j\in I_0$, 
{
in particular also co-prime to all $\tilde b_j^*$, $j\in I_0$.
Denote $b_i:=d_i\cdot\tilde b_i^*$ for $i\in I_0$.} 
\footnote{Observe that the choice of the $b_i$, $i \in I_0$, is not in conflict with the $b_i$ chosen in Step 1, because in that step the indices $i$ are in the set $I_N^{{\cB}}(h)$, which is disjoint from $I_0$.} Then $b_i\in\cB$; and
$b_i>n$, because otherwise $b_i\in\cB^{(n)}\subseteq\cB_1$, 
so that $d_i\mid b_i\mid\beta_1$ in contradiction to the choice of the $d_i$.
As $\tilde b_i^*\mid\gcd(b_i,\tilde b_i)$, we see that $\tilde b_i^*\mid (h_{\tilde b_i}-h_{b_i})$.
It follows that $h_{b_i}+i\in\tilde b_i^*\Z$.

We claim that there is $x\in\Z$ such that
\begin{align*}
x&=-(h_{b_i}+i)\mod b_i\quad\text{for all }i\in I_0,\\
x&=0\hspace{1.27cm}\mod\beta_1.
\end{align*}
Indeed, this follows from the CRT, because
{
\begin{compactitem}[-]
\item $\gcd(b_i,b_j)=\gcd(d_i\,\tilde{b}_i^*,d_j\,\tilde{b}_j^*)=\gcd(\tilde{b}_i^*,\tilde{b}_j^*)
\mid (h_{b_j}+j)-(h_{b_i}+i)$
for all $i,j\in I_0$, and
\item $\gcd(b_i,\beta_1)=\gcd(d_i\,\tilde{b}_i^*,\beta_1)=\gcd(\tilde b_i^*,\beta_1)\mid \tilde b_i^*\mid b_i\mid (h_{b_i}+i)$.
\end{compactitem}
}
Hence $x=\ell\beta_1$ for some $\ell\in\Z$, and 
\begin{equation}\label{eq:unique-i-1}
(h+\Delta(x)+\Delta(i))_{b_i}=0\quad\text{for all $i\in I_0$.}
\end{equation}
As $b_i>n\geqslant 2N+1$,
\begin{equation}\label{eq:unique-i-2}
(h+\Delta(x)+\Delta(k))_{b_i}\neq0\quad\text{for all }i\in I_0\text{ and } k\in\{-N,\dots,N\}\setminus\{i\}.
\end{equation}

Let $\cB^\diamondsuit:=\cB_1\cup\{b_i:i\in I_0\}$ and $h^\diamondsuit:=h+\Delta(x)$. We claim that
\begin{equation}\label{eq:I_N-identity-2}
I_N^\cB(h)\cup I_0=I_N^{\cB_1}(h+\Delta(x))\cup I_0=I_N^{\cB^\diamondsuit}(h^\diamondsuit).
\end{equation}
Indeed, the first identity follows from \eqref{eq:I_N-identity}, and the ``$\subseteq$''-direction of the second one follows from \eqref{eq:unique-i-1}.
So let $k\in I_N^{\cB^\diamondsuit}(h^\diamondsuit)$, and fix  $b\in\cB^\diamondsuit$ such that $h^\diamondsuit_b+k\in b\Z$. If $b\in\cB_1$, then $k\in I_N^{\cB_1}(h^\diamondsuit)$, and we are done. Otherwise there is $i\in I_0$ such that $b=b_i$, i.e. $(h+\Delta(x)+\Delta(k))_{b_i}= h^\diamondsuit_{b}+k\mod b\in b\Z$.
In view of \eqref{eq:unique-i-2} this implies $k=i\in I_0$.

\textbf{Step 3:}\quad
In this  step we  replace the set $\cB^\diamondsuit$ in \eqref{eq:I_N-identity-2} by $\cB$. The price to pay will be to replace also $h^\diamondsuit$ by another element from $U_S(h)$.

As $\cB^{(n)}\subseteq\cB_1\subseteq\cB^\diamondsuit\subseteq\cB$, we can apply Step 1 to $\tilde\cB=\cB^\diamondsuit$ and $h=h^\diamondsuit$.
This yields a set $\cB^{(n)}\subseteq\cB^\diamondsuit_1\subseteq\cB^\diamondsuit$ such that
\begin{equation}\label{eq:I_N-identity-3}
I_N^{\cB^\diamondsuit}(h^\diamondsuit)= I_N^{\cB^\diamondsuit_1}(h^\diamondsuit).
\end{equation}
Denote $\beta_1^\diamondsuit:=\lcm(\cB^\diamondsuit_1)$, and
define
\begin{equation}\label{eq:B3-def}
\cB_2:=\{b\in\cB\setminus\cB_1^\diamondsuit: \text{$b$ has no prime divisor $p>n$ co-prime to $\beta_1^\diamondsuit$}\}.
\end{equation}
As $\cB^{(n)}\subseteq\cB_1^\diamondsuit$, the set $\cB$ is the disjoint union of $\cB_1^\diamondsuit$, $\cB_2$ and $\cC(\cB_1^\diamondsuit,n)$, where $\cC(\cB_1^\diamondsuit,n)$ is defined in \eqref{eq:cC-def}. Observe that
a prime $p>n$ is co-prime to $\beta_1^\diamondsuit=\lcm(\cB_1^\diamondsuit)$, if and only if it is co-prime to $\lcm(\cB_1^\diamondsuit\cup\cB_2)$.
Hence
\begin{equation}
\begin{split}
\cC(\cB_1^\diamondsuit\cup\cB_2,n)
&=
\{b\in\cB\setminus(\cB_1^\diamondsuit\cup\cB_2): \text{$b$ has at least one prime divisor $p>n$ co-prime to $\beta_1^\diamondsuit$}\}\\
&=
\{b\in\cB\setminus\cB_1^\diamondsuit: \text{$b$ has at least one prime divisor $p>n$ co-prime to $\beta_1^\diamondsuit$}\}\\
&=
\cC(\cB_1^\diamondsuit,n)
=
\cB\setminus(\cB_1^\diamondsuit\cup\cB_2).
\end{split}
\end{equation}
As in the proof of Proposition~5.11 in \cite{BKKL2015} one
shows that $\cB_2$, and hence also $\cB_1^\diamondsuit\cup\cB_2$, is finite
\footnote{If $p$ is a prime divisor of any
$b'\in\cB_2$ then either $p\leqslant n$, or $p > n$ and $p\mid\beta_1^\diamondsuit$. Hence 
$|\Spec(\cB_2)| < \infty$, and since $\cB_2$ is primitive (it is a subset of the primitive set $\cB$), it must be finite \cite[Lemma~ 5.14]{BKKL2015}.}
 and that
\begin{equation}\label{eq:like68}
\begin{split}
I_N^{\cB_1^\diamondsuit\cup\cB_2}(h^\diamondsuit)
&=
I_N^{\cB_1^\diamondsuit}(h^\diamondsuit).
\end{split}
\end{equation}
Indeed, let $i\in I_N^{\cB_1^\diamondsuit\cup\cB_2}(h^\diamondsuit)$. 
Then there is $b\in\cB_1^\diamondsuit\cup\cB_2$ such that $h^\diamondsuit_{b}+i\in b\Z$. 
If $b\in\cB_1^\diamondsuit$, then $i\in I_N^{\cB_1^\diamondsuit}(h^\diamondsuit)$. Otherwise,
$b\in\cB_2\subseteq\cB\setminus\cB_1^\diamondsuit\subseteq\cB\setminus\cB^{(n)}$ has a prime divisor $p>n$. 
By the definition of $\cB_2$ in \eqref{eq:B3-def}, $p\mid \beta_1^\diamondsuit=\lcm(\cB_1^\diamondsuit)$. As $\cB_1^\diamondsuit=\cB^{(n)}\cup\{b_j: j\in I_N^{\cB^\diamondsuit}(h^\diamondsuit)\}$, it follows that
$p\mid b_j$ for some $j\in I_N^{\cB^\diamondsuit}(h^\diamondsuit)$.
Then $p\mid(h^\diamondsuit_{b_j}+j)$, and since $p\mid\gcd(b,b_j)\mid(h^\diamondsuit_{b}-h^\diamondsuit_{b_j})$, it follows that $p\mid(h^\diamondsuit_{b}+j)$. But we also have $p\mid b\mid (h^\diamondsuit_{b}+i)$,  so that $p\mid(i-j)$, which contradicts $p>n\geqslant2N+1$.

Thus having established \eqref{eq:like68},
we apply Lemma~\ref{lemma:new-2} to the set 
$\cB_1^\diamondsuit\cup\cB_2$
So there is $h'\in U_{\cB_1^\diamondsuit\cup\cB_2}(h^\diamondsuit)$ with $h'_b+ i\neq 0\mod b$ for all $b\in\cC(\cB_1^\diamondsuit\cup\cB_2,n)=\cB\setminus(\cB_1^\diamondsuit\cup\cB_2)$ and $i\in\{-N,\dots,N\}$.
Combined with \eqref{eq:I_N-identity-2}, \eqref{eq:I_N-identity-3} and \eqref{eq:like68} this shows
\begin{equation}\label{eq:I_N-final}
I_N^\cB(h)\cup I_0
=
I_N^{\cB^\diamondsuit}(h^\diamondsuit)
=
I_N^{\cB_1^\diamondsuit}(h^\diamondsuit)
=
I_N^{\cB_1^\diamondsuit\cup\cB_2}(h^\diamondsuit)
=
I_N^{\cB_1^\diamondsuit\cup\cB_2}(h')
=
I_N^{\cB}(h').
\end{equation}
Observe that $h'\in U_S(h^\diamondsuit)=U_S(h)$, because $S\subseteq\cB^{(n)}\subseteq\cB_1\cap\cB_1^\diamondsuit$, so that, for all $b\in S$, $h'_b=h^\diamondsuit_b=h_b+\ell\beta_1\mod b=h_b$.

To conclude the proof, notice  that, in view of \eqref{eq:I_N-final}, for any $i\in\{-N,\dots,N\}$
\begin{equation*}
(\varphi(h'))_i=0
\;\Leftrightarrow\;
i\in I_N^\cB(h')
\;\Leftrightarrow\;
i\in I_N^\cB(h)\cup I_0
\;\Leftrightarrow\;
(\varphi(h))_i=0\text{ or }i\in I_0
\;\Leftrightarrow\;
\omega_i=0.
\end{equation*}
\end{proof}

\begin{proposition}\label{prop:hereditary-ext}
If $\varphi_{W'}(h)\leqslant x\leqslant\varphi(h)$ for some $x\in\{0,1\}^\Z$ and $h\in H$, then there are $h_1,h_2,\dots\in H$ such that $h=\lim_{N\to\infty}h_N$ and
$x=\lim_{N\to\infty}\varphi(h_N)$.
\end{proposition}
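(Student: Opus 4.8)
The plan is to realize $x$ as a limit of values $\varphi(h_N)$ by matching $x$ on larger and larger windows $\{-N,\dots,N\}$ using Lemma~\ref{lemma:ext-1}. The key observation is that since $\varphi_{W'}(h)\leqslant x\leqslant\varphi(h)$, at every coordinate $i$ where $x_i=0$ but $\varphi(h)_i=1$, we necessarily have $(\varphi_{W'}(h))_i=0$ as well, because $\varphi_{W'}(h)\leqslant x$. So for each $N$, setting $I_0=I_0^{(N)}:=\{i\in\{-N,\dots,N\}: x_i=0<\varphi(h)_i\}$, the hypotheses of Lemma~\ref{lemma:ext-1} are met: $(\varphi_{W'}(h))_i=0<1=(\varphi(h))_i$ for all $i\in I_0^{(N)}$. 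The sequence $\omega=\omega^{(N)}$ produced in that lemma is precisely the restriction of $x$ to $\{-N,\dots,N\}$, since for $i\notin I_0^{(N)}$ either $\varphi(h)_i=0$, forcing $x_i=0$ by $x\leqslant\varphi(h)$, so $\omega_i=\varphi(h)_i=0=x_i$, or $\varphi(h)_i=1=x_i$ and $\omega_i=\varphi(h)_i=1=x_i$; and for $i\in I_0^{(N)}$, $\omega_i=0=x_i$.

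First I would, for each $N$, apply Lemma~\ref{lemma:ext-1} with $S=S_N:=\cB^{(2N+1)}$ (a finite set, since it is primitive with boundedly many prime factors — cf.\ the footnote citing \cite[Lemma~5.14]{BKKL2015}); actually any finite $S_N\subseteq\cB$ with $S_N\uparrow\cB$ and such that $U_{S_N}(h)$ shrinks to $\{h\}$ will do. This yields $h_N\in U_{S_N}(h)$ with $\varphi(h_N)_i=\omega_i^{(N)}=x_i$ for all $i\in\{-N,\dots,N\}$. Then I would check the two convergences: first, $h_N\to h$ in $H$, because $h_N\in U_{S_N}(h)$ and the cylinder sets $U_{S_N}(h)$ form a neighbourhood basis at $h$ as $S_N\uparrow\cB$ (the topology on $H$ is generated by the $U_S(h)$); second, $\varphi(h_N)\to x$ in $\{0,1\}^\Z$, because $\varphi(h_N)$ and $x$ agree on $\{-N,\dots,N\}$, and agreement on growing central blocks is exactly convergence in the product topology.

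The only subtlety is the choice of $S_N$: I should pick it large enough that it simultaneously (a) exhausts $\cB$ in the limit and (b) traps $h_N$ close to $h$. Taking $S_N$ to be any increasing sequence of finite subsets of $\cB$ with $\bigcup_N S_N=\cB$ handles (a), and then $h_N\in U_{S_N}(h)$ gives (b) automatically, since for any finite $S\subset\cB$ we have $S\subseteq S_N$ for $N$ large, whence $h_N\in U_{S_N}(h)\subseteq U_S(h)$. I expect the main (and essentially only) real content to have already been done inside Lemma~\ref{lemma:ext-1}; the proof of the proposition is the routine diagonal/exhaustion wrapper around it. The one thing to state carefully is the verification that the $\omega^{(N)}$ coincides with $x|_{\{-N,\dots,N\}}$, which is where the double inequality $\varphi_{W'}(h)\leqslant x\leqslant\varphi(h)$ is used in full.

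\begin{proof}
For $N\in\N$ set
\begin{equation*}
I_0^{(N)}:=\{i\in\{-N,\dots,N\}: x_i=0<(\varphi(h))_i\}.
\end{equation*}
Since $\varphi_{W'}(h)\leqslant x$, every $i\in I_0^{(N)}$ satisfies $(\varphi_{W'}(h))_i\leqslant x_i=0$, so $(\varphi_{W'}(h))_i=0<1=(\varphi(h))_i$. Thus Lemma~\ref{lemma:ext-1} applies with this $I_0=I_0^{(N)}$; let $\omega=\omega^{(N)}\in\{0,1\}^{\{-N,\dots,N\}}$ be the associated sequence. We claim $\omega_i^{(N)}=x_i$ for all $i\in\{-N,\dots,N\}$. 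Indeed, if $i\in I_0^{(N)}$ then $\omega_i^{(N)}=0=x_i$. If $i\notin I_0^{(N)}$ then $\omega_i^{(N)}=(\varphi(h))_i$; when $(\varphi(h))_i=0$ we get $x_i\leqslant(\varphi(h))_i=0$, so $x_i=0=\omega_i^{(N)}$, and when $(\varphi(h))_i=1$ then $i\notin I_0^{(N)}$ forces $x_i=1=\omega_i^{(N)}$.

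Fix an increasing sequence $S_1\subseteq S_2\subseteq\dots$ of finite subsets of $\cB$ with $\bigcup_N S_N=\cB$. By Lemma~\ref{lemma:ext-1} (applied with $S=S_N$) there is $h_N\in U_{S_N}(h)$ with $(\varphi(h_N))_i=\omega_i^{(N)}=x_i$ for all $i\in\{-N,\dots,N\}$.

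It remains to verify the two limits. For the first, let $S\subset\cB$ be any finite set; since $\bigcup_N S_N=\cB$, there is $N_0$ with $S\subseteq S_{N_0}$, and then for $N\geqslant N_0$ we have $h_N\in U_{S_N}(h)\subseteq U_S(h)$. As the cylinder sets $U_S(h)$ generate the topology at $h$, this shows $h=\lim_{N\to\infty}h_N$. For the second, $\varphi(h_N)$ and $x$ agree on $\{-N,\dots,N\}$ for every $N$, which is precisely the statement that $x=\lim_{N\to\infty}\varphi(h_N)$ in $\{0,1\}^\Z$.
\end{proof}
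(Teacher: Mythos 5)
Your proof is correct and follows essentially the same approach as the paper's: fix a filtration of $\cB$ by finite sets, apply Lemma~\ref{lemma:ext-1} at each scale $N$ to produce $h_N\in U_{S_N}(h)$ matching $x$ on $\{-N,\dots,N\}$, and conclude. You spell out the two small verifications (that the hypotheses of Lemma~\ref{lemma:ext-1} hold because $\varphi_{W'}(h)\leqslant x$, and that $\omega^{(N)}$ is exactly $x|_{\{-N,\dots,N\}}$ because also $x\leqslant\varphi(h)$) that the paper leaves implicit, which is a harmless and perhaps welcome addition.
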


\begin{proof}
Fix a filtration $S_1\subset S_2\subset\dots$ of $\cB$ by finite sets $S_k$.
For $N\in\N$, let $I_N:=\{i\in\{-N,\dots,N\}: x_i=0, (\varphi(h))_i=1\}$. 
By Lemma~\ref{lemma:ext-1}, there are $h_N\in U_{S_N}(h)$ $(N\in\N)$ such that $x_i=(\varphi(h_N))_i$ for all $i=-N,\dots,N$. Then $\lim_{N\to\infty}h_N=h$ and 
$\lim_{N\to\infty}\varphi(h_N)=x$.
\end{proof}

\begin{proof}[Proof of Theorem~\ref{theo:hereditary-ext}]
$\varphi(H)\subseteq\strip $ is obvious. So
let $x\in\strip $. 
There is $h\in H$ with $\varphi_{W'}(h)\leqslant x\leqslant\varphi(h)$.
Hence $x\in X_\varphi$ because of Proposition~\ref{prop:hereditary-ext}.
\end{proof}


\subsection{Proof of Proposition~\ref{prop:toeplitz}}\label{subsec:toeplitz-proofs}
All sets and quantities derived from $\cB^*$ instead of $\cB$ also carry a $^*$ superscript.
For $S\subset\cB$ define
\begin{equation}
\cM_{\cB^*|S}:=\cM_{\{b\in\cB^*: b\,\mid\,\lcm(S)\}},
\end{equation}
and observe that $\cM_{\cB^*}=\bigcup_{S\subset\cB}\cM_{\cB^*|S}$.

\begin{lemma}\label{lemma:B*S}
$\cM_{\cB^*|S}=\bigcap_{k\in\Z}\left(\cM_{\cB^*}+k\cdot\lcm(S)\right)$ and
$\Z\setminus\cM_{\cB^*|S}=\cE+\lcm(S)\cdot\Z$.
\end{lemma}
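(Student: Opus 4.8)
The identity $\cM_{\cB^*|S}=\bigcap_{k\in\Z}(\cM_{\cB^*}+k\lcm(S))$ is the crux, and the second identity follows from it by taking complements and invoking Lemma~\ref{lemma:B^*}b (which gives $\Z\setminus\cM_{\cB^*}=\cF_{\cB^*}=\cE$) together with the fact that $\bigcap_k(\cM_{\cB^*}+k\lcm(S))$ has complement $\bigcup_k(\cE+k\lcm(S))=\cE+\lcm(S)\Z$. So I will concentrate on the first identity.

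For the inclusion ``$\subseteq$'', suppose $m\in\cM_{\cB^*|S}$, so $b^*\mid m$ for some $b^*\in\cB^*$ with $b^*\mid\lcm(S)$. Then for every $k\in\Z$ we have $b^*\mid k\lcm(S)$, hence $b^*\mid m-k\lcm(S)$, so $m-k\lcm(S)\in\cM_{\cB^*}$, i.e. $m\in\cM_{\cB^*}+k\lcm(S)$ for all $k$. This direction uses only that $b^*$ divides $\lcm(S)$ and is completely routine.

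For the reverse inclusion ``$\supseteq$'', suppose $m\in\bigcap_{k\in\Z}(\cM_{\cB^*}+k\lcm(S))$; I must produce a single $b^*\in\cB^*$ with $b^*\mid\lcm(S)$ and $b^*\mid m$. For each $k$ pick $b_k^*\in\cB^*$ with $b_k^*\mid m-k\lcm(S)$. The plan is to extract, from the residues $m\bmod b_k^*$, a divisor of $\lcm(S)$ dividing $m$: writing $\lcm(S)=\prod_{p}p^{a_p}$ over the finitely many primes $p$ occurring in $S$, the condition $b_k^*\mid m-k\lcm(S)$ forces, for each prime power $p^e\parallel b_k^*$ with $e>a_p$, that $p^e\mid m-k\lcm(S)$ while $p^{a_p}\parallel k\lcm(S)$, which is impossible once $p\nmid k$; so choosing $k$ coprime to $\lcm(S)$ (e.g. $k=1$) we get that every prime power dividing $b_1^*$ divides $m$ to order at most $a_p$, i.e. $\gcd(b_1^*,\lcm(S)^{\infty})$ — more precisely $\gcd(b_1^*,\lcm(S))$ after accounting for multiplicities — already divides $m$. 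The cleaner route is: take $k=1$, so $b_1^*\mid m-\lcm(S)$; one checks prime-by-prime that $\gcd(b_1^*,\lcm(S))=b_1^*$, because any prime $p\mid b_1^*$ with $p\nmid\lcm(S)$ would have to divide $\lcm(S)$ since $p\mid b_1^*\mid m-\lcm(S)$ forces a contradiction only if we also know $p\mid m$ — so instead I will feed the congruences $m\equiv k\lcm(S)\pmod{b_k^*}$ into the CRT-style argument already used in Lemma~\ref{lemma:new-1} and Step~2 of Lemma~\ref{lemma:ext-1}, or more directly invoke the structure of $\cB^*$ via Lemma~\ref{lemma:B^*}b,c and the characterization $\cE=\bigcap_{S'\subset\cB}\cM_{\cA_{S'}}$ from \eqref{eq:E-complement}, reducing the claim to a statement purely about the sets $\cA_{S'}$.

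The main obstacle I anticipate is the ``$\supseteq$'' direction: one must go from \emph{infinitely many} membership conditions $m-k\lcm(S)\in\cM_{\cB^*}$ (one per $k$, realized by possibly different elements of $\cB^*$) to a \emph{single} element of $\cB^*$ dividing both $m$ and $\lcm(S)$. I expect the right tool is the disjoint decomposition $\cB=\cB_1^\diamondsuit\sqcup\cB_2\sqcup\cC$ philosophy from the proof of Lemma~\ref{lemma:ext-1}, or more economically the observation that $\cM_{\cB^*}=\bigcap_{S'\subset\cB}\cM_{\cA_{S'}}$ (Lemma~\ref{lemma:B^*}b) is itself defined by a $\lcm(S')$-periodic structure for each $S'$, so that $m\in\bigcap_k(\cM_{\cB^*}+k\lcm(S))$ unwinds, after fixing $S'\supseteq S$, into a statement about which residues mod $\lcm(S')$ lie in $\cM_{\cA_{S'}}$, where everything is finite and a direct CRT computation closes the argument. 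Once that periodicity bookkeeping is in place, the identity — and with it Proposition~\ref{prop:toeplitz}, since it shows the gaps between consecutive ``essential'' periods of $1_\cE$ are exactly governed by $\lcm(S)$-cosets and hence $1_\cE$ is Toeplitz, regular iff $m_H(\partial W')=0$ — follows without further difficulty.
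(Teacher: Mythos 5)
Your ``$\subseteq$'' direction and the reduction of the second identity to the first (via complements and Lemma~\ref{lemma:B^*}b) are correct and match the paper. But the ``$\supseteq$'' direction is where the whole content lies, and there you have not closed the argument. You correctly identify the obstacle --- turning infinitely many membership conditions $m-k\lcm(S)\in\cM_{\cB^*}$, each with its own witness $b_k^*$, into a single $b^*\in\cB^*$ dividing $\gcd(m,\lcm(S))$ --- but your attempts to resolve it do not work. The prime-by-prime computation with $k=1$ breaks down exactly where you say it does: a prime $p\mid b_1^*$ with $p\nmid\lcm(S)$ and $p\mid m-\lcm(S)$ gives no contradiction unless you already know $p\mid m$, which you do not. (Also, the phrase ``which is impossible once $p\nmid k$'' is incorrect: $p^e\mid m-k\lcm(S)$ together with $v_p(k\lcm(S))=a_p<e$ is perfectly consistent, it merely forces $v_p(m)=a_p$, which does not give $b_k^*\mid\lcm(S)$.) And the fallback --- ``feed the congruences into a CRT-style argument'' or ``unwind the $\lcm(S')$-periodicity'' --- is left entirely unspecified; $\gcd(b_1^*,\lcm(S))$ dividing $m$ is useless because $\gcd(b_1^*,\lcm(S))$ need not lie in $\cB^*$, and nothing in the periodicity bookkeeping you sketch produces a member of $\cB^*$.

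The idea you are missing is number-theoretic, not combinatorial: the paper sets $m:=\gcd(\lcm(S),n)$, $\hat n:=n/m$, $\hat s:=\lcm(S)/m$ (coprime), and applies \emph{Dirichlet's theorem on primes in arithmetic progressions} to get an infinite set $\cP'$ of primes $p\equiv\hat n\pmod{\hat s}$. Each $mp$ then lies in $n+\lcm(S)\Z\subseteq\cM_{\cB^*}$, so $mp=k_pb_p$ with $b_p\in\cB^*$. If every such $b_p$ were divisible by $p$, then writing $b_p=\ell_pp$ gives $m=k_p\ell_p$; pigeonholing over the finitely many divisors of $m$ yields an infinite $\cP_0'\subseteq\cP'$ with $\ell_p=\ell$ constant, hence $\ell\,\cP_0'\subseteq\cB^*$ --- but this is exactly what Lemma~\ref{lemma:B^*}c forbids. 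So some $b_p$ is coprime to $p$, forcing $b_p\mid m$, hence $b_p\mid n$ and $b_p\mid\lcm(S)$, i.e.\ $n\in\cM_{\cB^*|S}$. This Dirichlet--plus--Lemma~\ref{lemma:B^*}c mechanism is the heart of the proof and is absent from your proposal. (One small slip to note separately: you quote $\cE=\bigcap_{S'\subset\cB}\cM_{\cA_{S'}}$; equation~\eqref{eq:E-complement} gives this for $\Z\setminus\cE$, not for $\cE$.)
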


\begin{proof}
Let $n\in\cM_{\cB^*|S}$, so $n=\ell b$ for some $\ell\in\Z$ and  $b\in\cB^*$, $b\mid\lcm(S)$. Then $b\mid n-k\cdot\lcm(S)$ for all $k\in\Z$, i.e. 
$n\in\bigcap_{k\in\Z}\left(\cM_{\cB^*}+k\cdot\lcm(S)\right)$. 

Conversely suppose that $n\in\bigcap_{k\in\Z}\left(\cM_{\cB^*}+k\cdot\lcm(S)\right)$.
Then $n+\lcm(S)\cdot\Z\subseteq\cM_{\cB^*}$, in particular $n\in\cM_{\cB^*}$.
Let $m:=\gcd(\lcm(S),n)$, $\hat n:=n/m$ and $\hat s:=\lcm(S)/m$. Then $\hat n$ and $\hat s$ are co-prime, and by Dirichlet's theorem, $\cP':=\cP\cap(\hat n+\hat s\cdot\Z)$ is an infinite set of prime numbers. Observe that $m\,\cP'\subseteq\cM_{\cB^*}$, hence for all $p\in\cP'$ there are $k_p\in\Z$ and $b_p\in\cB^*$ such that $mp=k_pb_p$.

Suppose for a contradiction that for all $p\in\cP'$ there is $\ell_p\in\Z$ such that $b_p=\ell_pp$. Then $m=k_p\ell_p$, and as $\cP'$ is infinite, there are $\ell\in\Z$ and an infinite subset $\cP_0'$ of $\cP'$ such that $\ell_p=\ell$ for all $p\in\cP_0'$.
Hence $\ell\,\cP_0'=\{b_p:p\in\cP_0'\}\subseteq\cB^*$, which is impossible in view of Lemma~\ref{lemma:B^*}c.
So there is some $p\in\cP'$ that
does not divide $b_p$. Hence $m=k_p/p\cdot b_p\in b_p\Z$, and so $n=\hat{n}m\in b_p\Z$ and $\lcm(S)=\hat{s}m=(\hat{s}\cdot k_p/p)\cdot b_p$, i.e. $b_p\mid\lcm(S)$.

We proved $\cM_{\cB^*|S}=\bigcap_{k\in\Z}\left(\cM_{\cB^*}+k\cdot\lcm(S)\right)$. Taking complements this yields
$\Z\setminus\cM_{\cB^*|S}=\bigcup_{k\in\Z}\left(\cF_{\cB^*}+k\cdot\lcm(S)\right)=\cE+\lcm(S)\cdot\Z$, where we used also Lemma~\ref{lemma:B^*}b for the last identity.
\end{proof}

\begin{lemma}\label{lemma:bdW'}
\begin{compactenum}[a)]
\item Let $n\in\Z$ and $S\subset\cB$. Then
$U_S(\Delta(n))\subseteq\inn(W')$ if and only if $n\in\Z\setminus\cM_{\cA_S}$, and
$U_S(\Delta(n))\cap W'\neq\emptyset$ if and only if $n\in\Z\setminus\cM_{\cB^*|S}$.
\item 
$m_H(\partial W')=\inf_{S\subset\cB}d(\cM_{\cA_S}\setminus\cM_{\cB^*|S})$.
\end{compactenum} 
\end{lemma}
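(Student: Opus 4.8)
The strategy is to reduce everything to statements about cylinder sets $U_S(\Delta(n))$ in $H$, whose Haar measure is computable, and to relate membership of such cylinders in $\inn(W')$ or their intersection with $W'$ to arithmetic conditions involving $\cA_S$ and $\cB^*|S$. For part a), I would argue as follows. The cylinder $U_S(\Delta(n))$ is open and closed, and it is contained in $W'$ iff it is contained in $\overline{\inn(W)}$. Since $U_S(\Delta(n))$ is clopen, this is equivalent to $U_S(\Delta(n))\cap\inn(W)$ being dense in $U_S(\Delta(n))$; using \eqref{eq:E-intW} and the fact that $\Delta(\Z)$ is dense, I would translate this into: every $m$ with $m\equiv n\pmod{\lcm(S)}$ lies in $\cE$, equivalently $n+\lcm(S)\cdot\Z\subseteq\cE$. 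By Lemma~\ref{lemma:B*S} this says $n\in\cF_{\cA_S}$ — wait, more carefully: $n+\lcm(S)\cdot\Z\subseteq\cE$ iff $n\notin\cM_{\cA_S}$, since the condition of lying in $\cM_{\cA_S}$ depends only on $n\bmod\lcm(S)$ (each element of $\cA_S$ divides $\lcm(S)$). So the first equivalence becomes $U_S(\Delta(n))\subseteq\inn(W')$ iff $n\in\cF_{\cA_S}$. For the second equivalence, $U_S(\Delta(n))\cap W'\neq\emptyset$ iff $U_S(\Delta(n))\cap\inn(W)\neq\emptyset$ (since $W'=\overline{\inn(W)}$ and the cylinder is open), iff $U_S(\Delta(n))\cap\Delta(\cE)\neq\emptyset$ by density, iff there is $m\equiv n\pmod{\lcm(S)}$ with $m\in\cE$, iff $n+\lcm(S)\cdot\Z\not\subseteq\cM_{\cB^*}$; but $n+\lcm(S)\cdot\Z\subseteq\cM_{\cB^*}$ is exactly $n\in\cM_{\cB^*|S}$ by Lemma~\ref{lemma:B*S}, giving the claimed equivalence $n\in\Z\setminus\cM_{\cB^*|S}$.

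For part b), the idea is that $\partial W'=W'\setminus\inn(W')$, and I want to compute its Haar measure by approximating with finite unions of cylinders. Fix the filtration $S_1\subset S_2\subset\cdots$ of $\cB$. For each $S$, partition $H$ into the clopen cylinders $U_S(\Delta(n))$, $n$ ranging over $\Z/\lcm(S)\Z$; each has measure $1/\lcm(S)$. By part a), the union $\inn_S$ of those cylinders with $n\in\cF_{\cA_S}$ is contained in $\inn(W')$, and the union $\overline{W}_S$ of those cylinders with $n\in\Z\setminus\cM_{\cB^*|S}$ is a clopen set containing $W'$. Hence $m_H(\partial W')\leqslant m_H(\overline{W}_S\setminus\inn_S)=d\big((\Z\setminus\cM_{\cB^*|S})\setminus\cF_{\cA_S}\big)=d(\cM_{\cA_S}\setminus\cM_{\cB^*|S})$, using that $\cF_{\cA_S}\subseteq\Z\setminus\cM_{\cB^*|S}$ (which holds since $\cF_{\cA_S}\subseteq\cE$ and $\cE\cap\cM_{\cB^*}=\emptyset$, or directly from part a) as a cylinder can't be both inside $\inn(W')$ and disjoint from $W'$). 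Taking the infimum over $S$ gives one inequality. For the reverse inequality I would show that as $S$ runs through the filtration, $\inn_{S_k}\uparrow$ exhausts $\inn(W')$ up to a null set, and $\overline{W}_{S_k}\downarrow$ shrinks to $W'$ up to a null set. The first: $\inn(W')=\inn(\overline{\inn(W)})\supseteq\inn(W)$, and $\inn(W)=\bigcup_{S}\{h:U_S(h)\subseteq\inn(W)\}$... actually I should check $\bigcup_k\inn_{S_k}$ has the same measure as $\inn(W')$ — since $\inn(W')\supseteq\inn(W)$ and, conversely, any point of $\inn(W')$ has a cylinder neighborhood inside $W'$, hence (the cylinder being clopen and $\inn(W)$ dense in $W'$) inside $\overline{\inn(W)}$, and one checks such a cylinder is eventually counted; the measure of $\inn(W')\setminus\inn(W)$ is zero because... hmm, this needs care. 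The cleaner route: $m_H(W')=m_H(\overline{\inn(W)})=m_H(\inn(W))$ fails in general, but $m_H(W'\setminus\inn(W))$ could be positive only on $\partial$-type sets. I would instead directly identify $\bigcup_k\inn_{S_k}$ as a subset of $\inn(W')$ of full measure in $\inn(W')$, and $\bigcap_k\overline{W}_{S_k}$ as a superset of $W'$ of the same measure as $W'$, using monotone convergence and the fact that the cylinder algebra generates the Borel $\sigma$-algebra of $H$; then $m_H(\partial W')=\lim_k m_H(\overline{W}_{S_k}\setminus\inn_{S_k})=\inf_k d(\cM_{\cA_{S_k}}\setminus\cM_{\cB^*|S_k})$, and since the filtration is cofinal this equals $\inf_{S\subset\cB}d(\cM_{\cA_S}\setminus\cM_{\cB^*|S})$.

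The main obstacle I anticipate is the reverse inequality in part b): making rigorous that the clopen inner/outer approximations $\inn_{S_k}$ and $\overline{W}_{S_k}$ actually converge in measure to $\inn(W')$ and $W'$ respectively, rather than to something strictly smaller/larger on a positive-measure set. The subtle point is that $W'=\overline{\inn(W)}$ need not equal $\inn(W')$, and a priori $W'\setminus\bigcup_k\inn_{S_k}$ might contain more than $\partial W'$. I would handle this by showing that for $m_H$-a.e.\ $h$, the "fibered" densities computed along the filtration stabilize — concretely, that $m_H$-a.e.\ $h\in\inn(W')$ lies in some $\inn_{S_k}$ (because $h$ has a clopen neighborhood inside $W'$ on which $\inn(W)$ is dense, forcing that neighborhood into $\overline{\inn(W)}=W'$ and, after intersecting with finitely many coordinates, into the union of "good" cylinders), and dually for the complement of $W'$ using that $\partial W'$ itself is the only obstruction and has the measure we are computing. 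This self-referential flavor suggests it may be cleanest to prove the two inequalities somewhat independently: "$\leqslant$" by the direct clopen sandwich above, and "$\geqslant$" by exhibiting, for any $\epsilon>0$ and any $S$, a lower bound $d(\cM_{\cA_S}\setminus\cM_{\cB^*|S})\geqslant m_H(\partial W')-\epsilon$ via choosing $S$ large enough that the cylinder partition resolves $\partial W'$ to within $\epsilon$ in measure.
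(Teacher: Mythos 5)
Your overall architecture matches the paper's: reduce both assertions to arithmetic statements about $\cA_S$ and $\cM_{\cB^*|S}$ via clopen cylinder approximations, and in particular your treatment of the second equivalence in part a) and of the ``$\leqslant$'' direction in part b) (the clopen sandwich) is essentially the paper's argument. There are, however, two gaps.

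For the first equivalence in part a) you assert ``$n+\lcm(S)\cdot\Z\subseteq\cE$ iff $n\notin\cM_{\cA_S}$'' but only justify one implication: $n\notin\cM_{\cA_S}$ gives $n+\lcm(S)\cdot\Z\subseteq\cF_{\cA_S}\subseteq\cE$, since membership in $\cM_{\cA_S}$ depends only on $n\bmod\lcm(S)$. The converse is not covered by that remark and needs a separate argument: if $a=\gcd(b,\lcm(S))\mid n$ for some $b\in\cB$, then by the CRT (whose compatibility condition is exactly $\gcd(b,\lcm(S))\mid n$) there is $m\equiv n\pmod{\lcm(S)}$ with $b\mid m$, so $m\in\cM_\cB\subseteq\Z\setminus\cE$, hence $n+\lcm(S)\cdot\Z\not\subseteq\cE$. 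The paper avoids redoing any of this by noting $\inn(W')=\inn(W)$ and quoting \cite[Lemma~3.1a]{KKL2016} directly. (Also, your passage from ``$U_S(\Delta(n))\cap\inn(W)$ dense in $U_S(\Delta(n))$'' to ``every $m\equiv n\pmod{\lcm(S)}$ lies in $\cE$'' silently uses $\Delta(\Z)\cap(W'\setminus\inn(W))=\emptyset$, i.e.\ \cite[Lemma~3.5]{KKL2016}, which you should cite explicitly.)

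For part b) the ``self-referential'' worry you raise is a false alarm, and the step you leave open has a short deterministic resolution that is precisely the paper's proof. You do not need ``up-to-null-set'' exhaustion: the clopen sets satisfy $\bigcup_k\inn_{S_k}=\inn(W')$ and $\bigcap_k\overline{W}_{S_k}=W'$ \emph{exactly}. Indeed, if $h\in\inn(W')$, some cylinder $U_S(h)\subseteq W'$; pick $k$ with $S\subseteq S_k$ and $n$ with $\Delta(n)\in U_{S_k}(h)$, so $U_{S_k}(h)=U_{S_k}(\Delta(n))\subseteq\inn(W')$ and part a) gives $n\in\cF_{\cA_{S_k}}$, whence $h\in\inn_{S_k}$. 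Dually, if $h\notin W'$ then some cylinder around $h$ misses $W'$, so eventually $h\notin\overline{W}_{S_k}$. Monotone convergence then yields $m_H(W')=\lim_k d(\Z\setminus\cM_{\cB^*|S_k})$ and $m_H(\inn(W'))=\lim_k d(\Z\setminus\cM_{\cA_{S_k}})$, and subtracting (using $\cM_{\cB^*|S}\subseteq\cM_{\cB^*}\subseteq\cM_{\cA_S}$) gives $m_H(\partial W')=\lim_k d(\cM_{\cA_{S_k}}\setminus\cM_{\cB^*|S_k})$, which equals the infimum over all finite $S$ by cofinality and monotonicity. No circularity arises because $m_H(W')$ and $m_H(\inn(W'))$ are computed independently before being subtracted. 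Your plan identifies the right objects but stops just short of this two-line topological closure.
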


\begin{proof} a)\;Recall that $U_S(\Delta(n))$ is open. As $\inn(W')=\inn(W)$, the first assertion follows from \cite[Lemma~3.1a]{KKL2016}.
For the second one observe first that $U_S(\Delta(n))\cap W'\neq\emptyset$ if and only if $U_S(\Delta(n))\cap \inn(W')\neq\emptyset$. In view of \cite[Lemma~3.1a]{KKL2016} this is equivalent to: there are $S\subset S'\subset\cB$ and $\ell\in n+\lcm(S)\cdot\Z$ such that $\ell\in\cF_{\cA_{S'}}$, which can be equivalently rewritten as
\begin{equation*}
\begin{split}
&
\bigcup_{S\subseteq S'\subset\cB} \cF_{\cA_{S'}}\cap\left(n+\lcm(S)\cdot\Z\right)\neq\emptyset\\
\Leftrightarrow\quad&
\cE\cap\left(n+\lcm(S)\cdot\Z\right)\neq\emptyset\\
\Leftrightarrow\quad&
n\in\left(\cE+\lcm(S)\cdot\Z\right)
=
\Z\setminus\cM_{\cB^*|S}\quad(\text{Lemma~\ref{lemma:B*S}}).
\end{split}
\end{equation*}
b)\;Let $S_1\subset S_2\subset\dots$ be any filtration of $\cB$ by finite sets $S_k$. Then part a) of the lemma implies
\begin{equation*}
m_H(\inn(W'))
=
\lim_{k\to\infty}d(\Z \setminus\cM_{\cA_{S_k}})
=
\sup_{S\subset\cB}d(\Z\setminus\cM_{\cA_{S}})
\end{equation*}
and
\begin{equation*}
m_H(W')
=
\lim_{k\to\infty}d(\Z\setminus\cM_{\cB^*|S_k})
=
\inf_{S\subset\cB}d(\Z\setminus\cM_{\cB^*|S}).
\end{equation*}
Observing that $\cM_{\cB^*|S}\subseteq\cM_{\cB^*}\subseteq\cM_{\cA_{S'}}$ (Lemma~\ref{lemma:B^*}) for all $S,S'\subset\cB$, we conclude that
\begin{equation*}
m_H(\partial W')
=
m_H(W')-m_H(\inn(W'))
=
\lim_{k\to\infty}d(\cM_{\cA_{S_k}}\setminus\cM_{\cB^*|S_k})
=
\sup_{S\subset\cB}d(\cM_{\cA_S}\setminus\cM_{\cB^*|S}).
\end{equation*}
\end{proof}

For $S\subset\cB$ let $S^*:=(S\cap\cB_0)\cup\{a\in\prim\Ainf: S\cap a\cdot(\N\setminus\{1\})\neq\emptyset\}$, where $\cB_0=\cB\setminus\cM_{\Ainf}$ as in \eqref{eq:B_0-def}.
Then $S^*\subset\cB^*$.

\begin{lemma}\label{lemma:inclusions}
For any $S\subset\cB$ there is $S'\subset\cB^*$ with $S^*\subseteq S'$ such that
$\cM_{\cA^*_{S'}}\subseteq\cM_{\cA_S}\subseteq\cM_{\cA^*_{S^*}}$.
\end{lemma}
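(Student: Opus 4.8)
The plan is to unravel the definitions of $\cA_S$, $\cA^*_{S'}$ and the star-operation $S\mapsto S^*$, and then verify the two inclusions $\cM_{\cA^*_{S'}}\subseteq\cM_{\cA_S}$ (for a suitable $S'\supseteq S^*$) and $\cM_{\cA_S}\subseteq\cM_{\cA^*_{S^*}}$ separately. For the easier one, $\cM_{\cA_S}\subseteq\cM_{\cA^*_{S^*}}$: it suffices to show that for every $b\in\cB$ the number $\gcd(b,\lcm(S))$ is a multiple of some element of $\cA^*_{S^*}=\{\gcd(b^*,\lcm(S^*)):b^*\in\cB^*\}$. Given $b\in\cB$, I would pick $b^*\in\cB^*$ with $b^*\mid b$: if $b\in\cB_0$ take $b^*=b$, and if $b\in\cM_{\Ainf}$ take $b^*\in\prim\Ainf$ with $b^*\mid b$. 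Then I must check $\lcm(S^*)$ is arranged so that $\gcd(b^*,\lcm(S^*))\mid\gcd(b,\lcm(S))$; this is where the precise definition $S^*=(S\cap\cB_0)\cup\{a\in\prim\Ainf: S\cap a(\N\setminus\{1\})\neq\emptyset\}$ matters — each prime-power factor of $\lcm(S^*)$ should already divide $\lcm(S)$, because the generators of $S^*$ are divisors of generators of $S$ (an element $a\in\prim\Ainf$ enters $S^*$ only because some $ac\in S$ with $c\ge2$, and $a\mid ac$).

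For the other inclusion, $\cM_{\cA^*_{S'}}\subseteq\cM_{\cA_S}$, I need to choose the finite set $S'\subset\cB^*$ containing $S^*$ cleverly. The natural choice is to enlarge $S^*$ so that $\lcm(S')$ captures enough of the arithmetic of $\lcm(S)$: concretely, for each $b\in S$ that lies in $\cM_{\Ainf}$ I would want a primitive $\Ainf$-element below $b$ (already in $S^*$), but I also may need to throw in finitely many extra elements of $\cB^*$ so that, for every $b^*\in\cB^*$, the divisor $\gcd(b^*,\lcm(S'))$ is divisible by some $\gcd(b,\lcm(S))$ with $b\in\cB$. The mechanism is: given $b^*\in\cB^*$, if $b^*\in\cB_0\subseteq\cB$ then $b^*$ itself is a candidate $b\in\cB$; if $b^*\in\prim\Ainf$, then by Lemma~\ref{lemma:Ainf} there is an infinite pairwise coprime $\cC$ with $b^*\cC\subseteq\cB$, and I can select an element $b^*c\in\cB$ with $c$ coprime to $\lcm(S')$, so that $\gcd(b^*c,\lcm(S))$ has the same $b^*$-part as $\gcd(b^*,\lcm(S'))$. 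To make this work uniformly over all $b^*\in\cB^*$ that actually contribute to $\cA^*_{S'}$ — of which there are only finitely many distinct gcd-values — I enlarge $S'$ by finitely many such witnesses $b^*c$, intersected back with $\cB^*$. This is essentially the same kind of argument used in the proof of Lemma~\ref{lemma:B^*}c, recycling the coprimality-selection trick.

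The main obstacle is the interplay between $\lcm(S)$ and $\lcm(S')$: enlarging $S'$ changes $\lcm(S')$, which changes which $c\in\cC$ count as ``coprime to $\lcm(S')$'', so the witness-selection and the enlargement are mutually dependent. The clean way around this is a finiteness/stabilization argument: the set $\{\gcd(b^*,\lcm(S^*)):b^*\in\cB^*\}$ is finite, and more importantly only finitely many primes are relevant to $\lcm(S)$; one shows that once $S'$ contains $S^*$ together with one $\cB^*$-witness for each of the finitely many relevant $(b^*,\text{prime})$ incompatibilities, adding the witnesses does not introduce new obstructions (the witnesses $b^*c$ are chosen with $c$ coprime to the primes dividing $\lcm(S)$, hence their gcd with $\lcm(S)$ equals the gcd of $b^*$ with $\lcm(S)$, not of $b^*c$). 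So the order of operations is: (1) define the finite pool of relevant primes $P=\Spec(\lcm(S))$; (2) for each $b^*\in\cB^*$ needed, use Lemma~\ref{lemma:Ainf} to pick a witness $b^*c\in\cB$ with $c$ coprime to $P$ — finitely many choices suffice since $\cA^*$ only sees finitely many gcd-values; (3) set $S':=S^*\cup\{$those witnesses$\}\cap\cB^*$ and verify both inclusions by the divisibility bookkeeping above.
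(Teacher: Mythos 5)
Your first inclusion $\cM_{\cA_S}\subseteq\cM_{\cA^*_{S^*}}$ is fine (pick $b^*\in\cB^*$ dividing $b$, note $\lcm(S^*)\mid\lcm(S)$, hence $\gcd(b^*,\lcm(S^*))\mid\gcd(b,\lcm(S))$), and this matches the paper. The gap is in the second inclusion. Your proposed enlargement of $S^*$ adds ``witnesses'' $b^*c$ (with $b^*\in\prim\Ainf$, $c\in\cC\subseteq\N\setminus\{1\}$ coprime to $\lcm(S)$, and $b^*c\in\cB$), and then you intersect this collection back with $\cB^*$ so that $S'\subset\cB^*$. But those witnesses are never in $\cB^*$: for $b^*\in\prim\Ainf$ and $c\geqslant 2$ we have $b^*c\in\cM_{\Ainf}$, hence $b^*c\notin\cB_0$, and $b^*c\notin\prim\Ainf$ because $b^*$ is a proper divisor of it. So the intersection with $\cB^*$ deletes exactly the elements you were counting on, and you are left with essentially $S^*$, for which $\cA^*_{S^*}\subseteq\cM_{\cA_S}$ does not hold in general ($\lcm(S^*)$ can be a proper divisor of $\lcm(S)$, making $\gcd(b^*,\lcm(S^*))$ too small to lie in $\cM_{\cA_S}$).

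The mechanism you are missing is that the elements you must add to $S'$ are not $b^*c\in\cB$ but rather elements of $\cB^*$ chosen as \emph{representatives of the finitely many gcd-classes}: for each value $a$ in the finite set $\cD_S=\{\gcd(\lcm(S),b):b\in\cB^*\}$, fix some $b_a\in\cB^*$ with $\gcd(\lcm(S),b_a)=a$ and put $S':=S^*\cup\{b_a:a\in\cD_S\}$. Then for any $b\in\cB^*$ with $\gcd(\lcm(S),b)=a$ one has $a\mid b_a\mid\lcm(S')$ and $a\mid b$, hence $a\mid\gcd(\lcm(S'),b)$, so it suffices to verify $a\in\cA_S$. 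That last verification is the only place the coprimality trick enters: if $b\in\cB_0\subseteq\cB$ it is immediate, and if $b\in\prim\Ainf$ one picks $c\in\cC$ coprime to $\lcm(S)$ (which is fixed, so there is no circularity with $\lcm(S')$) and notes $a=\gcd(\lcm(S),bc)$ with $bc\in\cB$. In short: the coprimality selection is a verification step, not a construction step for $S'$; the construction uses $\cB^*$-representatives of gcd-classes, and that is what resolves the dependency you worried about.
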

\begin{proof}
Let $S\subset\cB$.
For $a\in\N$ define $\cB_a:=\{b\in\cB^*:\gcd(\lcm(S),b)=a\}$, and let $\cD_S:=\{a\in\N:\cB_a\neq\emptyset\}$. Then $\cD_S\subseteq\{1,\dots,\lcm(S)\}$ is finite and $\cB^*=\bigcup_{a\in\cD_S}\cB_a$. 
For each $a\in\cD_S$ fix some $b_a\in\cB_a$ and let 
\begin{equation*}
S':=S^*\cup\{b_a:a\in\cD_S\}.
\end{equation*}
Then $S'\subset\cB^*$, and we claim that $\cA^*_{S'}\subseteq\cM_{\cA_S}$, so that
$\cM_{\cA^*_{S'}}\subseteq\cM_{\cA_S}$. Indeed: Let $b\in\cB^*$. There is $a\in\N$ such that $b\in\cB_a$. As $b_a\in\cB_a$ as well, we have $a\mid\gcd(b_a,b)\mid\gcd(\lcm(S'),b)$,
so that $\gcd(\lcm(S'),b)\in a\Z$. It remains to show that $a\in\cA_S$.
\begin{compactitem}[-]
\item If $b\in\cB_a\cap\cB_0$, then $a=\gcd(\lcm(S),b)\in\cA_S$.
\item If $b\in\cB_a\cap\prim\Ainf$, then
there is an infinite pairwise co-prime set $\cC\subseteq\N\setminus\{1\}$ such that $b\,\cC\subseteq\cB$ (Lemma~\ref{lemma:Ainf}).
Choose any $c\in\cC$ co-prime to $\lcm(S)$. Then $bc\in\cB$,
so that $a=\gcd(\lcm(S),b)=\gcd(\lcm(S),bc)\in\cA_S$.
\end{compactitem}

For the the proof of the inclusion $\cM_{\cA_S}\subseteq\cM_{\cA^*_{S^*}}$, let $a=\gcd(\lcm(S),b)\in\cA_S$ with $b\in\cB$. Then $\lcm(S^*)\mid\lcm(S)$ by definition of $S^*$, and, by definition of $\cB^*$, there is $b^*\in\cB^*$ which divides $b$.
Hence $a\in \gcd(\lcm(S^*),b^*)\cdot\Z\subseteq\cM_{\cA^*_{S^*}}$. This proves $\cA_S\subseteq\cM_{\cA^*_{S^*}}$ and hence $\cM_{\cA_S}\subseteq\cM_{\cA^*_{S^*}}$.
\end{proof}

\begin{proof}[Proof of Proposition~\ref{prop:toeplitz}]
We proved in Lemma~\ref{lemma:B^*}e that $1_{\cF_{\cB^*}}=\varphi_{W'}(\Delta(0))=1_\cE$ is a Toeplitz sequence. It is regular if and only if $m_{H^*}(\partial W^*)=0$, see \cite[Prop.~1.2]{KKL2016}, and in view of \cite[Lemma~4.3]{KKL2016} \footnote{There is an obvious misprint in this lemma and its proof: All occurences of $\underline d$ must be replaced by $\bar d$.} this is equivalent to
\begin{equation*}
\inf_{S''\subset\cB^*}\ \bar d(\cM_{\cA^*_{S''}}\setminus\cM_{\cB^*})=0.
\end{equation*}
Here $\bar d$ denotes the upper asymptotic density $\bar d(M):=\limsup_{n\to\infty}n^{-1}\card(M\cap[1,n])$. Similarly, the lower asymptotic density is denoted by $d(M)$, and if the limit exists, which is the case for all sets $M=\cM_A$ with finite $A$, it is denoted by $d(M)$.

On the other hand, $m_H(\partial W')=0$ if and only if 
\begin{equation*}
\inf_{S\subset\cB}\ d(\cM_{\cA_{S}}\setminus\cM_{\cB^*|S})=0
\end{equation*}
by Lemma~\ref{lemma:bdW'}.
The equivalence of both conditions will follow, once we have proved the (more general) identity
\begin{equation}\label{eq:identity}
\inf_{S''\subset\cB^*}\ \bar d(\cM_{\cA^*_{S''}}\setminus\cM_{\cB^*})
=
\inf_{S\subset\cB}\ d(\cM_{\cA_{S}}\setminus\cM_{\cB^*|S})\ .
\end{equation}

So let $S\subset\cB$. Choose $S^*\subseteq S'\subset\cB^*$ as in Lemma~\ref{lemma:inclusions}.
Then, for any $S'\subseteq S''\subset\cB^*$,
\begin{equation*}
\cM_{\cA^*_{S''}}\subseteq\cM_{\cA^*_{S'}}
\subseteq
\cM_{\cA_S},
\end{equation*}
so that $\cM_{\cA^*_{S''}}\setminus\cM_{\cB^*}\subseteq\cM_{\cA_S}\setminus\cM_{\cB^*|S}$.
This proves the ``$\leqslant$'' direction in \eqref{eq:identity}.

Conversely, let $S''\subset\cB^*$. Choose any $S\subset\cB$, for which $S''\subseteq S^*$.
Then, by Lemma~\ref{lemma:inclusions},
\begin{equation*}
\cM_{\cA_S}\subseteq
\cM_{\cA^*_{S^*}}
\subseteq
\cM_{\cA^*_{S''}},
\end{equation*}
so that $\cM_{\cA_S}\setminus\cM_{\cB^*|S}\subseteq(\cM_{\cA^*_{S''}}\setminus\cM_{\cB^*})\cup(\cM_{\cB^*}\setminus\cM_{\cB^*|S})$. 
Hence
\begin{equation*}
\begin{split}
d(\cM_{\cA_S}\setminus\cM_{\cB^*|S})
&\leqslant
\bar d(\cM_{\cA^*_{S''}}\setminus\cM_{\cB^*})+\underline{d}(\cM_{\cB^*}\setminus\cM_{\cB^*|S}).
\end{split}
\end{equation*}
This proves the
``$\geqslant$'' direction in \eqref{eq:identity}, because $\underline{d}(\cM_{\cB^*})=\sup_{S\subset\cB}d(\cM_{\cB^*|S})$ by the theorem of Davenport and Erd\"os, see \cite[eqn.~(0.67)]{hall-book}.
\end{proof}

\end{document}